\documentclass[final,hidelinks,onefignum,onetabnum,reqno,nohypdvips]{siamart251216}

\usepackage{mathtools}
\usepackage{bm}
\usepackage{enumitem}
\usepackage{amsfonts}
\usepackage{graphicx}
\usepackage{epstopdf}
\usepackage{euscript}  
\usepackage{algorithmic}
\usepackage{amsopn}
\ifpdf
  \DeclareGraphicsExtensions{.eps,.pdf,.png,.jpg}
\else
  \DeclareGraphicsExtensions{.eps}
\fi

\DeclareMathOperator{\E}{\mathbb{E}}
\DeclareMathOperator{\Var}{Var}
\DeclareMathOperator{\Cov}{Cov}
\DeclareMathOperator{\Cor}{Corr}
\DeclareMathOperator{\trace}{tr}

\theoremstyle{definition}

\newcommand{\boldheading}[1]{\textbf{{#1}}.}
\newcommand{\ZZ}[1]{Z({#1})}
\newcommand{\Zx}{\ZZ{x}}
\newcommand{\Zy}{\ZZ{y}}
\newcommand{\Zxr}[1]{Z^{[{#1}]}(x)}
\newcommand{\dZr}[1]{\vec{Z}^{[{#1}]}}
\newcommand{\Nystrom}{Nystr\"{o}m's}
\newcommand{\KL}{Karhunen--Lo\`{e}ve}

\newcommand{\dx}{\mathrm{d}x}
\newcommand{\dy}{\mathrm{d}y}
\newcommand{\domega}{\mathrm{d}\omega}
\newcommand{\R}{\mathbb{R}}
\newcommand{\N}{\mathbb{N}}

\newcommand{\eps}{\varepsilon}

\newcommand{\F}{\mathcal{F}}

\renewcommand{\H}{\mathcal{H}}
\renewcommand{\P}{\mathbb{P}}
\newcommand{\B}{\mathcal{B}}
\newcommand{\mat}[1]{\mathbf{{#1}}}
\newcommand{\tran}{\mathsf{T}}
\renewcommand{\vec}[1]{{\mathchoice
                     {\mbox{\boldmath$\displaystyle{#1}$}}
                     {\mbox{\boldmath$\textstyle{#1}$}}
                     {\mbox{\boldmath$\scriptstyle{#1}$}}
                     {\mbox{\boldmath$\scriptscriptstyle{#1}$}}}}
\newcommand{\eip}[2]{\left\langle {#1}, {#2} \right\rangle_2}
\newcommand{\ip}[2]{\left\langle {#1}, {#2} \right\rangle}
\newcommand{\ipOmega}[2]{\left\langle {#1}, {#2} \right\rangle_{L^2(\Omega)}}
\newcommand{\normOmega}[1]{\|{#1}\|_{L^2(\Omega)}}

\newsiamremark{example}{Example}
\newsiamremark{remark}{Remark}
\newsiamremark{hypothesis}{Hypothesis}
\crefname{hypothesis}{Hypothesis}{Hypotheses}
\newsiamthm{claim}{Claim}
\newsiamremark{fact}{Fact}
\crefname{fact}{Fact}{Facts}
\newcommand{\D}{\EuScript{D}}

\newcommand{\defeq}{\vcentcolon=}
\newcommand{\C}{\mathrm{C}}
\newcommand{\A}{\mathrm{A}}
\newcommand{\K}{\mathrm{K}}

\newcommand{\AU}{\mathrm{A}_{\scriptscriptstyle{U}}}

\newcommand{\PU}{\mathrm{P}_{\!\scriptscriptstyle{U}}}

\newcommand{\Pb}{\bm{\mathcal{P}}}
\newcommand{\Proj}[2]{\Pb_{\!\scriptscriptstyle{#1}}^{{#2}}} 

\headers{A Primer on the KLE}{A.~Alexanderian}
\title{A Primer on the Karhunen--Lo\`{e}ve Expansion}
\author{Alen Alexanderian\thanks{North Carolina State University (\email{alexanderian@ncsu.edu}).}}

\allowdisplaybreaks

\begin{document}

\maketitle

\begin{abstract} 
This article provides a primer on the spectral representation of random fields
via the \KL{} expansion (KLE).  The goal is to bridge the gap between the
theoretical foundations of the KLE and its application in computational modeling
under uncertainty.  We detail how tools from operator theory and probability
are combined to analyze the convergence and optimality of the KLE. We  also emphasize the
associated computational and mathematical modeling considerations.
\end{abstract}

\begin{keywords}
Random field, \KL{} expansion, 
Gaussian Process, Uncertainty quantification
\end{keywords}

\begin{MSCcodes}
65C20, 
60G60, 
65R20, 
35R60 
\end{MSCcodes}

\section{Introduction}
Uncertainty quantification (UQ) in physical systems often involves modeling
spatially varying uncertain parameters as random fields~\cite{Smith24}.  These field quantities
may represent source terms, boundary conditions, or heterogeneous material
properties in partial differential equations (PDEs) governing physical
processes.  To illustrate, we depict a sample realization of a random field
model for the log-permeability field of a heterogeneous medium in
Figure~\ref{fig:perm}.
\begin{figure}[ht]\centering
\includegraphics[width=.45\textwidth]{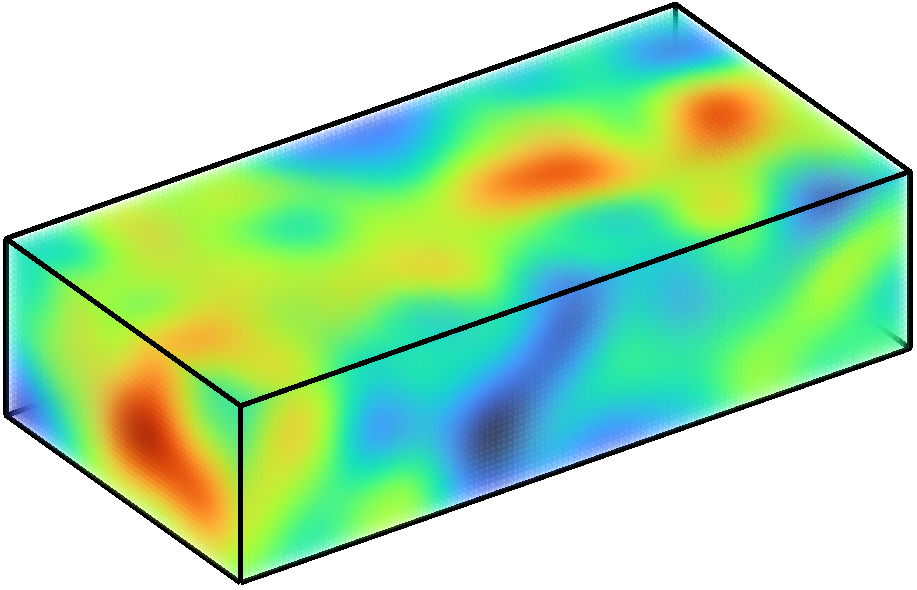}
\caption{A realization of a random field model for the log-permeability field 
of a heterogeneous medium. The bright region in the center indicates 
a high permeability channel.}
\label{fig:perm}
\end{figure}

The \KL{} expansion (KLE)~\cite{Karhunen47,Loeve77} is a fundamental tool in
UQ: it provides an efficient spectral representation of 
random fields.  The KLE is also closely related to Principal Component Analysis 
(PCA)~\cite{Jolliffe02} and Proper Orthogonal Decomposition
(POD)~\cite{BerkoozHolmesLumley93}, which are well-known dimension reduction
approaches in statistical learning and computational fluid dynamics,
respectively. The differences between these techniques are primarily in
perspective and applications.

The analysis of the KLE draws upon tools from functional analysis and probability
theory; the associated numerical implementations require methods from numerical
analysis and careful attention to modeling considerations.  Synthesizing the
requisite concepts from these areas involves studying multiple
sources that target readers with different backgrounds and often employ
different systems of notation.  The goal of this article, which evolved from the
note~\cite{Alexanderian15} I wrote over ten years ago, is to bridge the
gap between theory and applications of the KLE in an accessible manner.  

We provide a rigorous account of the underlying theory, followed by a focused
discussion of modeling and computations with random fields.  To provide an
accessible presentation, some of the key theoretical results are first discussed
in a finite-dimensional setting before considering infinite-dimensional Hilbert
space formulations.  Also, in some cases, to build intuition, we follow a
constructive approach where the derivations precede the precise statements of
the results.  

\boldheading{Audience}
This article is primarily intended for graduate students who have a strong 
background in linear algebra, are familiar with elements of Hilbert space theory
and probability, and have some knowledge of core concepts from numerical
analysis.  The topics considered herein can be integrated into courses on
uncertainty quantification, Bayesian inverse problems, or stochastic PDEs.
Beyond its pedagogical goals, this article also aims to serve as a 
reference on the KLE for researchers in uncertainty quantification, 
spatial statistics, and machine learning.

\boldheading{Article overview}
In Section~\ref{sec:motivation}, we consider a spectral representation for
random \emph{vectors}, which is called the \emph{discrete} KLE by some authors; 
see, e.g.,~\cite[Section 9.1]{Fukunaga13}.  The discrete KLE of a random vector
$\vec{X}$ is also known as the PCA of $\vec{X}$; see, e.g.,~\cite[Section
5.7]{Soize17}.  We adopt the term discrete KLE to remain consistent with the
continuous case.  The discussion in Section~\ref{sec:motivation} provides a
blueprint for our study of random \emph{fields}. Namely, by restricting the
mathematical prerequisites to linear algebra and basic probability, that section
aims to provide the core insights behind spectral analysis of random fields by
their KLE.

While the discrete KLE of a random vector relies on the spectral decomposition
of its covariance \emph{matrix}, the KLE of a random field is tied to the
spectral representation of its covariance \emph{operator}. The latter 
involves an interesting interplay between concepts from functional analysis and
random field theory.  A brief outline of the requisite background concepts is
provided in Section~\ref{sec:background}.  Subsequently, in
Section~\ref{sec:analysis}, we analyze the convergence and optimality of the
KLE. We also discuss the special case of Gaussian random fields in that section. 

In Section~\ref{sec:discretization}, we discuss discretization of
the KLE, which is needed to facilitate numerical computations
with random fields. Our discussion in that section  
focuses primarily on \Nystrom{} method.
Section~\ref{sec:numerics} provides a bridge to practical computations.  
There, we focus on Gaussian random fields and consider various modeling,
mathematical, and computational considerations, along with 
illustrative computational experiments.  
Concluding remarks are provided in the epilogue (Section~\ref{sec:conc}), where
we mention references for further reading and provide perspectives on
computations with non-Gaussian random fields and goal-oriented dimension
reduction.

\boldheading{Notation}
We denote the Euclidean inner product of  
vectors $\vec{x}$ and $\vec{y}$ in $\R^n$ by 
$\eip{\vec x}{\vec y} \defeq \vec{y}^\tran\vec{x}$.
The Euclidean norm 
of $\vec{x} \in \R^n$ is denoted by $\| \vec{x} \|_2 \defeq \eip{\vec x}{\vec x}^{1/2}$.
For a real-valued random variable $X$, we denote its expectation and variance by 
$\E[X]$ and $\Var[X]$, respectively. We use the standard notation $\delta_{ij}$
for the Kronecker delta: $\delta_{ij}$ equals one if $i = j$, and 
zero otherwise.
Throughout the article, $\D$ denotes a spatial region on which we define random
fields.  We primarily consider the case where 
$\D$ is a closed and bounded interval  $[a, b] \subset
\mathbb{R}$.  This is done for simplicity of the presentation---the entire
analysis that follows extends naturally to random fields on compact subsets of
$\R^d$ with $d > 1$. 
Herein, $L^2(\D)$ is the space of 
(equivalence classes of) real-valued square integrable functions on $\D$. 
This space is equipped with the inner product 
$\ip{u}{v} \defeq \int_\D u(x) v(x) \, \dx$
and norm $\| \cdot \| = \ip{\cdot}{\cdot}^{1/2}$.

\section{Motivation: the discrete KLE}\label{sec:motivation}
Consider a vector-valued random variable $\vec{Z}:\Omega \to \R^n$, where
$\Omega$ is a sample space.  We assume $\E[\|\vec Z\|_2^2] < \infty$. Also, for
simplicity, suppose $\vec{Z}$ has zero mean; i.e., $\E[\vec{Z}] = \vec{0}$.  
Subsequently, the covariance matrix of $\vec{Z}$ is given by
\begin{equation}\label{equ:cov_matrix}
C_{ij} = \E[Z_i Z_j] - \E[Z_i]\E[Z_j] = \E[Z_i Z_j], \quad i, j \in \{1, \ldots, n\}.
\end{equation}
Since $\E[\| \vec{Z}\|_2^2] < \infty$, by the Cauchy--Schwarz
inequality, $\E[Z_i Z_j] < \infty$ for all $i, j$.
Also, $\mat{C}$ is symmetric and positive semidefinite. 
To see the latter, note that 
$\mat{C} =
\E[\vec{Z}\vec{Z}^\tran]$.

\subsection{A spectral representation}\label{sec:fd-study}
Consider the spectral decomposition of $\mat{C}$, 
\begin{equation}\label{equ:spectral}
\mat{C} = \sum_{i=1}^n \lambda_i \vec{v}_i \vec{v}_i^\tran.
\end{equation}
Here, 
$\{\vec{v}\}_{i=1}^n$ is an orthonormal basis of eigenvectors of
$\mat{C}$ and 
$\{\lambda_i\}_{i=1}^n$ are the corresponding eigenvalues 
listed in descending order,
$\lambda_1 \geq \lambda_2 \geq \cdots \geq \lambda_n \geq 0$.

We can represent $\vec{Z}(\omega)$ in the 
eigenbasis of $\mat{C}$. Namely,
for $\omega \in \Omega$, 
\begin{equation}\label{equ:KLE_FD}
\vec{Z}(\omega) = \sum_{i=1}^n z_i(\omega) \vec{v}_i,
\quad \text{where}\quad z_i(\omega) = \eip{\vec Z(\omega)}{\vec{v}_i}. 
\end{equation}
This spectral representation of $\vec{Z}$ is the discrete KLE.
Note that 
\[
\E[z_i] = 
\E[\eip{\vec{Z}}{\vec{v}_i}] = 
\E[\vec{v}_i^\tran \vec{Z}] =
\vec{v}_i^\tran \E[\vec{Z}] = 0,
\]
for each $i \in \{1, \ldots n\}$.
Moreover, for $i, j \in \{1, \ldots, n\}$,
\[
\begin{aligned}
\E[z_i z_j] 
  = \E[\eip{\vec{Z}}{\vec{v}_i} \eip{\vec{Z}}{\vec{v}_j}]
  &= \E[\vec{v}_i^\tran \vec{Z}\vec{Z}^\tran \vec{v}_j]\\
  &= \vec{v}_i^\tran \E[\vec{Z}\vec{Z}^\tran] \vec{v}_j
  = \vec{v}_i^\tran \mat{C} \vec{v}_j = \lambda_i \delta_{ij}.
\end{aligned}
\]
The following result summarizes the preceding discussion: 
\begin{proposition}\label{prp:KLE_FD}
Let $\vec{Z}$ be a mean zero random vector with $\E[\| \vec{Z}\|_2^2 ] < \infty$.
Then, $\vec{Z}$ can be represented according to~\eqref{equ:KLE_FD}, where 
the coefficients $z_i$ are mean zero random variables 
that satisfy 
$\E[z_i z_j] = \lambda_i \delta_{ij}$, for $i, j \in \{1, \ldots, n\}$.
\end{proposition}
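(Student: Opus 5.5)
The plan is to assemble the computations already carried out just before the statement, while making sure each step is justified by the hypotheses $\E[\vec Z]=\vec 0$ and $\E[\|\vec Z\|_2^2]<\infty$. First I will secure the ingredients of~\eqref{equ:KLE_FD}. By~\eqref{equ:cov_matrix} and Cauchy--Schwarz, every entry $C_{ij}=\E[Z_iZ_j]$ is finite. Because $\mat C=\E[\vec Z\vec Z^\tran]$, the matrix is symmetric. It is also positive semidefinite, since $\vec x^\tran\mat C\vec x=\E[\eip{\vec Z}{\vec x}^2]\ge 0$. The spectral theorem for real symmetric matrices then gives the decomposition~\eqref{equ:spectral}, with an orthonormal eigenbasis $\{\vec v_i\}_{i=1}^n$ and eigenvalues $\lambda_1\ge\cdots\ge\lambda_n\ge 0$; nonnegativity follows from $\lambda_i=\vec v_i^\tran\mat C\vec v_i$.

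Next comes the representation itself. Fix $\omega$ and expand $\vec Z(\omega)$ in the orthonormal basis $\{\vec v_i\}$. Writing $\mat V=[\vec v_1,\ldots,\vec v_n]$, we have $\mat V\mat V^\tran=\mat I$, so $\vec Z(\omega)=\sum_i \vec v_i\vec v_i^\tran\vec Z(\omega)=\sum_i z_i(\omega)\vec v_i$. This is exactly~\eqref{equ:KLE_FD}. Each $z_i=\vec v_i^\tran\vec Z$ is a finite linear combination of the components $Z_j$, so it is a random variable. It is also integrable, because $|z_i|\le\|\vec Z\|_2$, so $\E[z_i^2]<\infty$. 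This matters because the two expectation computations that follow need the relevant expectations to be finite.

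Finally, I will verify the moment properties using linearity of expectation. For the means, $\E[z_i]=\vec v_i^\tran\E[\vec Z]=0$. For the second moments, $\E[z_iz_j]=\E[\vec v_i^\tran\vec Z\vec Z^\tran\vec v_j]=\vec v_i^\tran\mat C\vec v_j=\lambda_j\vec v_i^\tran\vec v_j=\lambda_i\delta_{ij}$. Interchanging expectation with the constant vectors is legitimate because everything is a finite sum of integrable terms $v_{ik}Z_kZ_lv_{jl}$, and each of these is integrable by the finiteness of $C_{kl}$.

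None of these steps is a real obstacle. The only point needing care is the integrability bookkeeping, namely that the $z_i$ and the products $z_iz_j$ have finite expectations, which the bound $|z_i|\le\|\vec Z\|_2$ handles.
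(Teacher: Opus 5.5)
Your proposal is correct and follows the paper's own argument. The paper expands $\vec Z(\omega)$ in the orthonormal eigenbasis of $\mat C$ and uses linearity of expectation to get $\E[z_i]=\vec v_i^\tran\E[\vec Z]=0$ and $\E[z_iz_j]=\vec v_i^\tran\mat C\vec v_j=\lambda_i\delta_{ij}$; your integrability bookkeeping, such as $|z_i|\le\|\vec Z\|_2$ and Cauchy--Schwarz for the products $Z_kZ_l$, simply makes explicit what the paper leaves implicit.
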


By Proposition~\ref{prp:KLE_FD}, we have that $\Var[z_i] = \E[z_i^2] = \lambda_i$ for each
$i \in \{1, \ldots, n\}$. Observe that 
if $\lambda_i = 0$, then $\Var[z_i] = \E[z_i] = 0$. Thus, the  
coefficients corresponding to zero eigenvalues vanish almost surely. 
To avoid such degenerate cases, in what follows,  we assume
$\vec{Z}$ is such that $\mat{C}$ is positive definite. 

Note that~\eqref{equ:KLE_FD} provides a decomposition of $\vec{Z}$ 
where the expansion coefficients are mean zero and uncorrelated random variables. 
However, the coefficients have different variances. In practice, it is common 
to consider a spectral representation in terms of standardized (unit-variance) 
random coefficients. Namely, we consider 
\begin{equation}\label{equ:KLE_FD_normalized}
\vec{Z}(\omega) = \sum_{i=1}^n \sqrt{\lambda_i} \xi_i(\omega) \vec{v}_i, 
\quad \text{where} \quad
\xi_i(\omega) = z_i(\omega) / \sqrt{\lambda_i}.
\end{equation}
This provides a spectral representation of $\vec{Z}$ in terms of standardized
uncorrelated coefficients $\{\xi_i\}_{i=1}^n$ and exposes the eigenvalues 
explicitly.

The representation~\eqref{equ:KLE_FD_normalized} is useful 
from a computational perspective. Namely, if $\vec{Z}$ is a high-dimensional 
random variable and $\mat{C}$ has rapidly decaying eigenvalues, we can 
approximate $\vec{Z}$ by a truncated expansion 
\[
\vec{Z}(\omega) \approx \dZr{r}(\omega) \defeq \sum_{i=1}^r \sqrt{\lambda_i} \xi_i(\omega) \vec{v}_i,
\]
with $r \ll n$.  This is an important and widely used dimension reduction idea.

We can also quantify the approximation error due to truncation:
\begin{equation}\label{equ:err_KLE}
\E\Big[\| \vec{Z} - \dZr{r}\|_2^2\Big] = \E\Big[\big\|\!\!\sum_{i={r+1}}^n z_i \vec{v}_i\big\|_2^2\Big]
= \sum_{i=r+1}^n \lambda_i. 
\end{equation} 
Subsequently, we can express the relative mean-square error as 
\[
    \frac{\E\Big[\| \vec{Z} - \dZr{r}\|_2^2\Big]}{\E\Big[\| \vec{Z}\|_2^2\Big]}
    = 
    \frac{\sum_{i=r+1}^n \lambda_i}{\sum_{i=1}^n \lambda_i} 
    = 1 -  \frac{\sum_{i=1}^r \lambda_i}{\sum_{i=1}^n \lambda_i}.
\]
This suggests a criterion for choosing a truncation level: pick $r$ such that 
the ratio
\[
\rho(r) \defeq \frac{\sum_{i=1}^r \lambda_i}{\sum_{i=1}^n \lambda_i}
\] 
is larger than a prespecified 
threshold. 

Note also that the total variance of $\vec{Z}$, given by the sum of variances 
of its components, satisfies 
$\sum_{i=1}^n \Var[Z_i] = \trace(\mat{C}) = \sum_{i=1}^n \lambda_i$. Therefore, 
$\rho(r) = (\sum_{i=1}^r \lambda_i)/\trace(\mat{C})$.
Furthermore, $\sum_{i=1}^r \lambda_i$ is the total variance of $\dZr{r}$.
Thus, $\rho(r)$ may also be interpreted as the percentage of total variance 
captured by the truncated KLE with $r$ terms.

\subsection{Optimality properties of the discrete KLE}\label{sec:optimality}
In principle, one could represent $\vec{Z}$ 
in any orthonormal basis. 
However, there is a notion of optimality 
attached to the basis of eigenvectors of $\mat{C}$. To make matters concrete, 
let $U = \{\vec u_i\}_{i=1}^n$ be an arbitrary 
orthonormal basis of $\R^n$. We can expand
$\vec{Z}$ as 
\begin{equation}\label{equ:Fourier_expansion}
\vec{Z}(\omega) = \sum_{i=1}^n \alpha_i(\omega) \vec{u}_i,
\quad \text{where}\quad \alpha_i(\omega) = \eip{\vec Z(\omega)}{\vec{u}_i}. 
\end{equation}
Arguments similar to  
those leading to Proposition~\ref{prp:KLE_FD} yield 
$\E[\alpha_i] = 0$, 
for each $i$, and 
\[
\E[ \alpha_i \alpha_j] =  \eip{\mat{C} \vec{u}_i}{\vec{u}_j}, 
\quad i, j \in \{1, \ldots, n\}. 
\]
The latter reveals the first shortcoming of representing $\vec{Z}$ in an
arbitrary orthonormal basis: we no longer have a decomposition with 
mutually uncorrelated coefficients. 

We next ask the following question: can we choose the basis $U = \{
\vec{u}_i\}_{i=1}^n$ such that the approximation error $\eps(U,r) \defeq \E[ \|
\vec{Z} - \sum_{i=1}^r \alpha_i \vec{u}_i \|_2^2]$ is smaller than the error
obtained in~\eqref{equ:err_KLE}? As seen shortly, the answer is no. 

Observe that
\begin{multline}\label{equ:err_Fourier_generic}
\eps(U,r) =  \E\Big[\big\|\sum_{i={r+1}}^n \alpha_i \vec{u}_i\big\|_2^2\Big] 
= \sum_{i=r+1}^n \E[ \alpha_i^2] 
= \sum_{i=r+1}^n \eip{\mat{C} \vec{u}_i}{\vec{u}_i}\\
= \sum_{i=1}^n \eip{\mat{C} \vec{u}_i}{\vec{u}_i} - \sum_{i=1}^r \eip{\mat{C} \vec{u}_i}{\vec{u}_i} 
= \trace(\mat{C}) - \sum_{i=1}^r \eip{\mat{C} \vec{u}_i}{\vec{u}_i}.
\end{multline}
Next, let 
$V = \{\vec{v}_i\}_{i=1}^n$ be the eigenbasis of $\C$, ordered as
in~\eqref{equ:spectral}. That is, the eigenpairs are ordered such that 
$\lambda_1 \geq \lambda_2 \geq \cdots \geq \lambda_n > 0$. 
We have 
\[
\eps(V, r) = \sum_{i=r+1}^n \lambda_i = \trace(\mat{C}) - \sum_{i=1}^r \lambda_i. 
\]
Therefore,
\[
\eps(U, r) - \eps(V, r) = \sum_{i=1}^r \lambda_i - \sum_{i=1}^r \eip{\mat{C} \vec{u}_i}{\vec{u}_i}
=  
\sum_{i=1}^r \lambda_i - \trace(\mat{U}_r^\tran \mat{C} \mat{U}_r),
\]
where $\mat{U}_r = [ \vec{u}_1 \; \vec{u}_2 \; \cdots \; \vec{u}_r]\in \R^{n \times r}$.
Note that $\mat{U}_r^\tran \mat{U}_r = \mat{I}$. 

We now invoke 
the Ky Fan Maximum Principle (see~\cite[Page 309]{Dym13} or~\cite[Page 35]{Bhatia97}), 
which implies
\begin{equation}\label{equ:KyFanQ}
\trace(\mat{Q}^\tran \mat{C} \mat{Q}) \leq \sum_{i=1}^r \lambda_i,
\quad \text{for all $\mat{Q} \in \R^{n \times r}$ with } \mat{Q}^\tran \mat{Q} = \mat{I}.
\end{equation}
Using this result yields,
$\eps(U, r) - \eps(V, r) = \sum_{i=1}^r \lambda_i - \trace(\mat{U}_r^\tran \mat{C} \mat{U}_r) \geq 0$.
That is, 
\[
   \eps(U, r) \geq \eps(V, r).
\]
This reveals a well-known~\cite{Ghanem,Fukunaga13} optimality property of the 
representation~\eqref{equ:KLE_FD}: expanding $\vec{Z}$ in the basis of
eigenvectors of $\mat{C}$ yields minimal mean-square error when using truncated
expansions. We state a precise statement of this result next. 

For an orthonormal basis 
$U = \{\vec{u}_1, \ldots, \vec{u}_n\} \subset \R^n$
and $r \in \{1, \ldots, n\}$,
we let 
\[
\Proj{U}{r} \defeq \sum_{i=1}^r \vec{u}_i \vec{u}_i^\tran
\]
denote the orthogonal projector onto the span of $\{\vec{u}_1, \ldots, \vec{u}_r\}$.
The preceding constructive argument proves the following well-known result:
\begin{proposition}\label{prp:optimality}
Let $\vec{Z}$ be a mean zero random vector with $\E[\| \vec{Z}\|_2^2 ] < \infty$ 
and a positive definite covariance matrix $\mat{C}$. Let 
$\{(\lambda_i, \vec{v}_i)\}_{i=1}^n$ be the eigenpairs of $\mat{C}$, 
with the eigenvalues listed in descending order, $\lambda_1 \geq \lambda_2 \geq \cdots 
\geq \lambda_n > 0$. Denote $V = \{\vec{v}_1, \ldots, \vec{v}_n\}$.
Then, for every orthonormal basis $U = \{\vec{u}_1, \vec{u}_2, \ldots, \vec{u}_n\}$ 
of $\R^n$,
\[
\E[ \| \vec{Z} - \Proj{V}{r}\vec{Z} \|_2^2] \leq 
\E[ \| \vec{Z} - \Proj{U}{r}\vec{Z} \|_2^2], \quad \text{for each } r \in \{1, \ldots, n\}.
\]
\end{proposition}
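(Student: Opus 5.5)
The plan is to express both mean-square errors through traces involving $\mat{C}$, and then compare them using the Ky Fan Maximum Principle~\eqref{equ:KyFanQ}. This essentially formalizes the constructive argument preceding the statement.

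Fix an orthonormal basis $U$ and $r \in \{1,\ldots,n\}$. First, I will compute the projection error in terms of coefficients. Since $\Proj{U}{r}$ is an orthogonal projector, the residual is $\vec{Z} - \Proj{U}{r}\vec{Z} = \sum_{i=r+1}^n \alpha_i \vec{u}_i$, with $\alpha_i = \eip{\vec Z}{\vec u_i}$. Orthonormality (Parseval) then gives $\|\vec{Z} - \Proj{U}{r}\vec{Z}\|_2^2 = \sum_{i=r+1}^n \alpha_i^2$ pointwise in $\omega$. Taking expectations and using $\E[\alpha_i^2] = \vec{u}_i^\tran \E[\vec Z \vec Z^\tran]\vec{u}_i = \eip{\mat{C}\vec u_i}{\vec u_i}$ yields~\eqref{equ:err_Fourier_generic}. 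All expectations are finite because $\E[\|\vec Z\|_2^2]<\infty$.

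Next, I will use that $\sum_{i=1}^n \eip{\mat C \vec u_i}{\vec u_i} = \trace(\mat U^\tran \mat C \mat U) = \trace(\mat C)$ for the orthogonal matrix $\mat U = [\vec u_1\,\cdots\,\vec u_n]$. This gives
\[
\E\big[\|\vec Z - \Proj{U}{r}\vec Z\|_2^2\big] = \trace(\mat C) - \trace(\mat U_r^\tran \mat C \mat U_r).
\]
Applying the same identity with $U=V$, and using $\mat{C}\vec v_i = \lambda_i \vec v_i$, gives $\E[\|\vec Z - \Proj{V}{r}\vec Z\|_2^2] = \trace(\mat C) - \sum_{i=1}^r \lambda_i$.

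The only nontrivial ingredient is the final comparison, which reduces to $\trace(\mat U_r^\tran \mat C \mat U_r) \le \sum_{i=1}^r \lambda_i$. I will get this from~\eqref{equ:KyFanQ} with $\mat Q = \mat U_r$, which is admissible since $\mat U_r^\tran \mat U_r = \mat I$.

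If one prefers not to cite Ky Fan, I would prove it directly. Write $\trace(\mat U_r^\tran \mat C \mat U_r) = \sum_{j=1}^n \lambda_j w_j$ with $w_j = \|\mat U_r^\tran \vec v_j\|_2^2$. These weights satisfy $0 \le w_j \le 1$, since $\mat U_r\mat U_r^\tran$ is a projector, and $\sum_j w_j = \trace(\mat U_r^\tran \mat U_r) = r$. Maximizing a linear functional with descending coefficients $\lambda_j$ over this set of weights is achieved by $w_1=\cdots=w_r=1$ and all other weights zero. This gives the bound and completes the proof.

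Positive definiteness of $\mat C$ is not actually needed for the inequality; it is simply inherited from the standing assumption.
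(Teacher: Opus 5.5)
Your proof is correct and follows the paper's argument essentially verbatim. Both you and the paper write each truncation error as $\trace(\mat{C})$ minus a sum of Rayleigh quotients, as in \eqref{equ:err_Fourier_generic}, and conclude with the Ky Fan Maximum Principle \eqref{equ:KyFanQ} applied to $\mat{Q} = \mat{U}_r$. Your optional self-contained proof of Ky Fan, via weights $w_j = \|\mat{U}_r^\tran \vec{v}_j\|_2^2 \in [0,1]$ with $\sum_j w_j = r$, is also valid and more elementary than the Courant--Fischer route the paper uses for the Hilbert-space version in Appendix~\ref{sec:KyFanProof}. You are also right that positive definiteness of $\mat{C}$ is not needed.
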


There are several approaches to proving
Proposition~\ref{prp:optimality}.  Some authors justify the result directly
using a variational argument involving a Lagrange multiplier approach; see,
e.g.,~\cite[Section 9.1]{Fukunaga13} or~\cite[Section 2.3.2]{Ghanem}.  Others
rely on deeper results regarding the eigenvalues of symmetric positive
semidefinite matrices; see, e.g.,~\cite{Keating83,Dur98}.  Our constructive
proof of Proposition~\ref{prp:optimality} is closely related to the one
in~\cite{Keating83} that relies on the
Poincar\'{e} Separation Theorem~\cite[Page 64]{Rao73}.  
In our context, however, invoking the Ky
Fan Maximum Principle was a natural choice.  Ky Fan's result, which extends naturally to an
infinite-dimensional Hilbert space setting, is used in
Section~\ref{sec:analysis} to prove the optimality of the KLE for 
random fields.  

The optimality property in Proposition~\ref{prp:optimality} 
can also be understood in terms of the total
variance captured by a spectral representation of $\vec{Z}$.
Specifically, for a random vector $\vec Z$ with
$\E[\| \vec Z\|_2^2] < \infty$ and a rank-$r$ orthogonal projector $\Pb$, we
have 
\begin{equation}\label{equ:var_decomp}
\underbracket[0.5pt][0.7ex]{\mathbb{E}[\|\vec Z\|_2^2]}_{\text{total variance}} = 
   \underbracket[0.5pt][0.7ex]{\mathbb{E}[\|\Pb\vec Z\|_2^2]}_{\text{captured variance}} + 
   \underbracket[0.5pt][0.7ex]{\mathbb{E}[\|(\mat{I} - \Pb)\vec Z \|_2^2]}_{\text{variance of the error}}.
\end{equation}
In the case of a truncated discrete KLE with $r$ terms, $\Pb \vec Z$ is the orthogonal
projection onto the span of the $r$ leading eigenvectors $\{\vec{v}_i\}_{i=1}^r$
of the covariance matrix $\mat{C}$. Consequently, in view of
\eqref{equ:var_decomp}, the truncated discrete KLE, $\dZr{r} = \sum_{i=1}^r
\eip{\vec{Z}}{\vec{v}_i}\vec{v}_i$, \emph{maximizes} the total variance captured
by the orthogonal projection $\Pb \vec{Z}$ over all rank-$r$ orthogonal projectors $\Pb$. 
Equivalently, the truncated KLE \emph{minimizes} the total variance of the
error among all such projections. 

\section{Analysis and probability background}\label{sec:background}
In this section, we provide a concise summary of the requisite background concepts from
functional analysis (Section~\ref{sec:FA}) and probability theory 
(Section~\ref{sec:RFs}).

\subsection{Functional analysis essentials}\label{sec:FA}
Let $\H$ be a real Hilbert space equipped with an inner product
$\ip{\cdot}{\cdot}$ and norm $\| \cdot \| = \ip{\cdot}{\cdot}^{1/2}$.  
A linear operator $\A:\H \to \H$ is said to be
bounded if there exists a real constant $M$ with 
\[
    \| \A x \| \leq M \| x \|, \quad \text{for all } x \in \H.
\]

Recall that a set $E \subset \H$ is compact if every
sequence $\{ z_k \}_{k \in \N}$ in $E$ has a subsequence that converges in $E$.
A linear operator $\K \colon \H \to \H$ is said to 
be compact if for each bounded set $E \subset \H$, 
the closure of its image $\overline{\K(E)}$
is a compact set. Equivalently,  
$\K$ is compact if 
for every bounded sequence $\{x_k\}_{k \in \N} \subset \H$, 
the sequence $\{\K x_k\}_{k \in \N}$ has a 
convergent subsequence.

Much theory exists
regarding compact operators.  
In the present discussion, we consider the Hilbert
space $L^2(\D)$ and are concerned
primarily with a specific type of compact operators on this space.
Namely, we consider \emph{integral operators} of the form 
\begin{equation}\label{equ:HS}
[\K u](x) = \int_\D k(x, y) u(y) \, \dy, \quad u \in L^2(\D), \, x\in \D,
\end{equation}
where $k$ is continuous on $\D \times \D$ and is symmetric; i.e., 
$k(x, y) = k(y, x)$ for all $x, y \in \D$. In this context, $k$ is called the \emph{kernel}
of the integral operator.
Our motivation behind considering integral operators is their role in the study
of random fields. As discussed in Section~\ref{sec:RFs}, the covariance operator
of a random field is an integral operator with the associated covariance
function as the kernel.

The operator $\K$ is a standard example of a compact linear operator 
and is discussed in many functional analysis textbooks; see, e.g.,~\cite[Pages 384--385]{naylorsell}.
Due to symmetry of the kernel, this operator is also \emph{self-adjoint}:
\[
  \ip{\K u}{v} = \ip{u}{\K v}, \quad \text{for all } u, v \in L^2(\D).
\]
We record the following technical result regarding operators of this type. 
\begin{lemma} \label{lem:compactop}
Let $k:\D \times \D \to \R$ be a continuous and symmetric function. Then, 
the operator
$\K : L^2(\D) \to L^2(\D)$ defined in~\eqref{equ:HS} 
is a compact self-adjoint operator.
\end{lemma}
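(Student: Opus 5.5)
We prove three things in turn: that $\K$ is a bounded linear operator on $L^2(\D)$, that it is self-adjoint, and that it is compact. Since $\D = [a,b]$ is compact and $k$ is continuous on $\D \times \D$, the kernel is bounded, say $|k| \leq M_k$, and uniformly continuous. Linearity is immediate.

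\emph{Boundedness.} For $u \in L^2(\D)$, the Cauchy--Schwarz inequality gives
\[
|[\K u](x)| \leq \Big(\int_\D k(x,y)^2\,\dy\Big)^{1/2}\|u\|.
\]
Integrating over $x$ yields $\|\K u\| \leq \|k\|_{L^2(\D\times\D)}\|u\|$, so $\K$ is bounded.

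\emph{Self-adjointness.} The function $(x,y)\mapsto k(x,y)u(y)v(x)$ is integrable on $\D\times\D$, because $k$ is bounded and $|u(y)v(x)|$ is integrable by Cauchy--Schwarz on the product domain. Fubini's theorem therefore lets us swap the order of integration in $\ip{\K u}{v}$. Combined with the symmetry $k(x,y)=k(y,x)$, this gives $\ip{\K u}{v}=\ip{u}{\K v}$.

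\emph{Compactness.} This is the main step. Let $\{u_k\}$ be a sequence with $\|u_k\|\leq B$ for all $k$, and set $f_k = \K u_k$.
\begin{itemize}
\item[(i)] \emph{Uniform boundedness.} We have $|f_k(x)| \leq M_k\sqrt{b-a}\,B$ for every $x \in \D$.
\item[(ii)] \emph{Equicontinuity.} By Cauchy--Schwarz,
\[
|f_k(x)-f_k(x')| \leq \sqrt{b-a}\,\sup_{y\in\D}|k(x,y)-k(x',y)|\,B.
\]
By uniform continuity of $k$, the right-hand side tends to zero as $|x-x'|\to 0$, uniformly in $k$.
\end{itemize}
So $\{f_k\}$ is a uniformly bounded, equicontinuous family in $C(\D)$. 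The Arzel\`a--Ascoli theorem then gives a subsequence converging uniformly on $\D$. Since $\|g\|\leq\sqrt{b-a}\,\sup_{\D}|g|$ for continuous $g$, uniform convergence implies convergence in $L^2(\D)$. Hence $\{\K u_k\}$ has a convergent subsequence, which is the sequential characterization of compactness recalled above.

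The only real obstacle is the equicontinuity estimate in (ii). It hinges on the uniform continuity of $k$, which in turn relies on the compactness of $\D\times\D$. An alternative route would be to approximate $k$ uniformly by polynomials via Stone--Weierstrass, obtaining finite-rank operators that converge to $\K$ in operator norm, and then invoke closedness of the set of compact operators. The Arzel\`a--Ascoli route above is more self-contained, so that is the one we follow.
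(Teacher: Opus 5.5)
The paper gives no proof of this lemma. It records it as a standard fact and cites Naylor--Sell, so there is no in-paper argument to match. Your proof is correct and complete:
\begin{itemize}
\item the bound $\|\K u\|\le\|k\|_{L^2(\D\times\D)}\|u\|$ gives boundedness;
\item Fubini plus symmetry of $k$ gives self-adjointness;
\item Arzel\`a--Ascoli gives compactness, and this step does not use symmetry at all.
\end{itemize}

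The common textbook alternative is the one you mention at the end. You approximate $k$ by kernels $k_n$ with finite-rank operators $\K_n$, for instance by truncating an expansion of $k$ in a product orthonormal basis of $L^2(\D\times\D)$, or via Stone--Weierstrass. Your own bound then gives $\|\K-\K_n\|\le\|k-k_n\|_{L^2(\D\times\D)}\to 0$. Compactness follows because norm limits of finite-rank operators are compact.

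The two routes buy different things:
\begin{itemize}
\item The approximation route is more general. It shows compactness for every $k\in L^2(\D\times\D)$, so continuity of the kernel is not actually needed for this lemma.
\item Your Arzel\`a--Ascoli route genuinely uses uniform continuity of $k$, but it yields more. It shows that $\K$ maps $L^2(\D)$ into $C(\D)$, and maps bounded sets to relatively compact subsets of $C(\D)$ in the sup norm.
\end{itemize}

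That extra information matters later in the paper. Since $v_i=\lambda_i^{-1}\K v_i$ whenever $\lambda_i>0$, every eigenfunction of $\C$ with a positive eigenvalue has a continuous representative. This is what makes the pointwise evaluations $v_i(x)$ in Theorem~\ref{thm:mercer} and Theorem~\ref{thm:KL} meaningful.

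Two cosmetic points:
\begin{itemize}
\item You use $k$ both for the kernel and as the sequence index, so $M_k$ reads as if it depended on the index. Rename the index, e.g.\ to $n$.
\item In the boundedness step, you square the pointwise estimate before integrating over $x$; say so explicitly.
\end{itemize}
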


In what follows, we assume $k$ in Lemma~\ref{lem:compactop} is such that 
the corresponding integral operator $\K$ is positive. That is, 
$\ip{\K u}{u} \geq 0$, for all $u \in L^2(\D)$. 
For clarity,
we refer to such kernels as \emph{admissible} kernels. This is made precise 
in the following definition.
\begin{definition}
We call a function $k: \D \times \D \to \R$ an admissible kernel 
on $\D$ if it is continuous and symmetric, and the integral operator 
\eqref{equ:HS} is positive. 
\end{definition}

\boldheading{Spectral representation of compact self-adjoint operators}
A compact self-adjoint operator on an infinite dimensional 
real Hilbert space provides a Hilbert space analogue of a real symmetric matrix. 
In particular, such operators admit a spectral representation.
Let $\K$ be a compact self-adjoint operator on $\H$. By the Spectral Theorem for compact 
self-adjoint operators, 
$\K$ has real eigenvalues $\{\lambda_i\}_{i=1}^\infty$ with corresponding eigenvectors 
$\{v_i\}_{i=1}^\infty$, 
\[
\K v_i = \lambda_i v_i, \quad i \in \N,
\]
and
the eigenvectors form an orthonormal basis of $\H$. Furthermore, for every 
$u \in \H$,
\[
\K u = \sum_{i=1}^\infty \lambda_i \ip{u}{v_i} v_i,
\]
where the series converges in norm; that is, $\| \K u -
\sum_{i=1}^n \lambda_i \ip{u}{v_i} v_i\| \to 0$ as $n\to\infty$. For details on 
spectral theory of compact self-adjoint operators see, e.g.,
\cite[Chapter II]{Knapp05}
or~\cite[Chapter 6]{naylorsell}.
For a positive compact self-adjoint operator, 
the eigenvalues are non-negative and we may
order them as 
$\lambda_1 \geq \lambda_2 \geq \cdots \geq 0$.

The following classical result is the Hilbert space version of the Ky Fan
Maximum Principle considered in Section~\ref{sec:optimality}.  For completeness,
we present a proof of this result in Appendix~\ref{sec:KyFanProof}.  The proof
uses the Courant--Fischer min-max theorem, which provides a variational
characterization of eigenvalues of a compact positive selfadjoint operator; see,
e.g., \cite[Theorem 4.2.7]{HsingEubank15} or~\cite[Page 247]{HirschLacombe99}.
Such arguments are standard in matrix analysis and operator
theory.

\begin{proposition}\label{prp:KyFanHilbert}
Let $\H$ be a real separable Hilbert space with inner product $\ip{\cdot}{\cdot}$. And let 
$\A : \H \to \H$ be a positive compact self-adjoint operator with eigenvalues
\[
\lambda_1 \geq \lambda_2 \geq \cdots \geq 0,
\]
counting multiplicities. Then, for every orthonormal set 
$\{u_1, \ldots, u_r\} \subset \H$,
\begin{equation}\label{equ:KyFan}
\sum_{i=1}^r \ip{\A u_i}{u_i}
\leq 
\sum_{i=1}^r \lambda_i.
\end{equation}
\end{proposition}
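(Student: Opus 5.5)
We plan to reduce the inequality to a weighted sum of eigenvalues and then bound that sum by an elementary rearrangement argument; this avoids the min-max theorem entirely. Let $\{v_j\}_{j\in\N}$ be the orthonormal eigenbasis of $\A$ given by the Spectral Theorem, with $\A v_j = \lambda_j v_j$ and the eigenvalues in descending order. If $\H$ is finite-dimensional we pad the list with zeros; the argument is unchanged. For each $i$ we expand $u_i$ in this basis. Since $\A u_i = \sum_j \lambda_j \ip{u_i}{v_j} v_j$ converges in norm, continuity of the inner product gives
\[
\ip{\A u_i}{u_i} = \sum_{j=1}^\infty \lambda_j \ip{u_i}{v_j}^2 .
\]
Summing over $i$ yields $\sum_{i=1}^r \ip{\A u_i}{u_i} = \sum_{j=1}^\infty \lambda_j w_j$ with weights $w_j \defeq \sum_{i=1}^r \ip{u_i}{v_j}^2$. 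All terms are nonnegative, so interchanging the finite and infinite sums is harmless.

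Next we record two constraints on the weights. First, $0 \le w_j \le 1$: $w_j = \|\Pb v_j\|^2$, where $\Pb$ is the orthogonal projector onto $\mathrm{span}\{u_1,\dots,u_r\}$, and hence $w_j \le \|v_j\|^2 = 1$. Equivalently, this is Bessel's inequality for the orthonormal set $\{u_i\}$ applied to $v_j$. Second, $\sum_j w_j = r$: by Parseval's identity in the basis $\{v_j\}$,
\[
\sum_j w_j = \sum_{i=1}^r \sum_j \ip{u_i}{v_j}^2 = \sum_{i=1}^r \|u_i\|^2 = r .
\]
It is important that the $v_j$ form a complete orthonormal basis here, which is exactly what the Spectral Theorem supplies.

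The remaining step, and the only real content, is the elementary fact that maximizing $\sum_j \lambda_j w_j$ over weights with $0\le w_j\le 1$ and $\sum_j w_j = r$ puts full weight on the $r$ largest $\lambda_j$. We prove it directly:
\[
\sum_{j=1}^r \lambda_j - \sum_{j} \lambda_j w_j = \sum_{j=1}^r \lambda_j (1-w_j) - \sum_{j>r} \lambda_j w_j \ge \lambda_r \sum_{j=1}^r (1-w_j) - \lambda_r \sum_{j>r} w_j .
\]
The inequality uses $1-w_j\ge 0$ together with $\lambda_j\ge\lambda_r$ for $j\le r$, and $\lambda_j\le\lambda_r$ for $j>r$. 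The right-hand side equals $\lambda_r\bigl(r - \sum_j w_j\bigr) = 0$. This gives~\eqref{equ:KyFan}. The only care needed is the convergence of $\sum_{j>r}\lambda_j w_j$; this is ensured because $\lambda_j \le \lambda_1$ and $\sum_j w_j = r < \infty$. We therefore expect no genuine obstacle beyond justifying the series manipulations.
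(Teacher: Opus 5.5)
Your proof is correct, and it takes a different route from the paper's. The paper diagonalizes the compression $\AU = \PU \A \PU$ on $U = \operatorname{span}\{u_1,\ldots,u_r\}$. It writes $\sum_{i=1}^r \ip{\A u_i}{u_i} = \trace(\AU) = \sum_{j=1}^r \mu_j$ in terms of the eigenvalues $\mu_1 \ge \cdots \ge \mu_r$ of $\AU$. It then applies the Courant--Fischer min-max theorem with test subspaces $W_j = \operatorname{span}\{\phi_1,\ldots,\phi_j\}$ to get $\mu_j \le \lambda_j$ for each $j$, and sums.

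You instead work in the eigenbasis of $\A$ itself. You reduce the left-hand side to $\sum_j \lambda_j w_j$ with $0 \le w_j = \|\PU v_j\|^2 \le 1$ and $\sum_j w_j = r$, and finish with an elementary weighting estimate pivoting on $\lambda_r$.

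Your argument is more self-contained. It needs only the Spectral Theorem and Bessel/Parseval, with no min-max theorem and no trace identity for the compressed operator. The equality case can also be read off directly from the weights. The paper's argument yields more: it gives the termwise bounds $\mu_j \le \lambda_j$, which are the upper half of the Poincar\'e separation inequalities mentioned in Section~\ref{sec:optimality}. The Ky Fan inequality is just their sum.

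One small refinement to your write-up: completeness of $\{v_j\}$ is not actually needed. Bessel's inequality already gives $\sum_j w_j \le r$, and since $\lambda_r \ge 0$, your final quantity $\lambda_r\bigl(r - \sum_j w_j\bigr)$ is still nonnegative. This lets you keep only the eigenvectors with positive eigenvalues, which can always be listed in descending order. That removes a minor indexing glitch: when $\A$ has infinitely many positive eigenvalues and a nontrivial kernel, a complete eigenbasis cannot be enumerated by $\N$ in descending order. Your argument survives that case anyway, since it only uses that indices $1,\ldots,r$ carry $\lambda_1,\ldots,\lambda_r$ and every other eigenvalue is at most $\lambda_r$.
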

\begin{proof}
See Appendix~\ref{sec:KyFanProof}.
\end{proof}
Observe that 
in~\eqref{equ:KyFan} equality is attained by letting  
$\{u_1,\ldots,u_r\}$ be an orthonormal set of eigenvectors 
corresponding to the leading eigenvalues $\lambda_1,\ldots,\lambda_r$.

\boldheading{Back to integral operators: a spectral representation for the kernel} \label{sec:mercer}
The Spectral Theorem for compact self-adjoint operators is a fundamental tool in
spectral analysis of random fields; it facilitates an infinite-dimensional
analogue of~\eqref{equ:KLE_FD}. Analyzing the convergence of the resulting
infinite series requires invoking another key result:  we need \emph{Mercer's}
Theorem, which provides a spectral representation for the kernel of an integral
operator using the eigenpairs of the operator.

\begin{theorem}[Mercer] \label{thm:mercer}
Let $k: \D \times \D \to \R$ be an admissible kernel on $\D$. 
If $\{(\lambda_i, v_i)\}_{i=1}^\infty$ are the eigenpairs of the integral 
operator $\K$ defined in \eqref{equ:HS}, then
\begin{equation} \label{equ:kernel-dec}
k(x, y) = \sum_{i=1}^\infty \lambda_i v_i(x) v_i(y),
\end{equation}
where the series converges absolutely and uniformly in $\D \times \D$.
\end{theorem}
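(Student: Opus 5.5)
Our approach is the classical route through Dini's theorem: first obtain pointwise information about the partial sums of \eqref{equ:kernel-dec} on the diagonal, then upgrade it to uniform convergence. Throughout we use the eigenpairs $\{(\lambda_i,v_i)\}$ of $\K$ furnished by the Spectral Theorem and Lemma~\ref{lem:compactop}.

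\textbf{Step 1 (regularity of eigenfunctions).} For $\lambda_i>0$ we write $v_i(x)=\lambda_i^{-1}\int_\D k(x,y)v_i(y)\,\dy$. Since $k$ is uniformly continuous on the compact set $\D\times\D$, the Cauchy--Schwarz inequality shows that $v_i$ is continuous, so pointwise evaluation makes sense. Only eigenpairs with $\lambda_i>0$ contribute to the series, so we may discard the rest.

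\textbf{Step 2 (the remainder kernel is admissible and nonnegative on the diagonal).} Set $k_n(x,y)=k(x,y)-\sum_{i=1}^n\lambda_i v_i(x)v_i(y)$. This function is continuous and symmetric, and its integral operator is $\K-\sum_{i\le n}\lambda_i\ip{\cdot}{v_i}v_i$. The latter is positive, because $\ip{\K u}{u}=\sum_i\lambda_i\ip{u}{v_i}^2$.

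We then show that a continuous kernel with a positive integral operator satisfies $k_n(x,x)\ge0$. Suppose instead that $k_n(x_0,x_0)<0$. By continuity, $k_n<0$ on a small square around $(x_0,x_0)$. Testing with the indicator $u$ of a small interval around $x_0$ then gives $\ip{\K_n u}{u}<0$, a contradiction.

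Consequently $\sum_{i=1}^n\lambda_i v_i(x)^2\le k(x,x)\le M\defeq\max_{\D}k(x,x)$ for all $n$. The series $\sum_i\lambda_i v_i(x)^2$ therefore converges for every $x$.

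\textbf{Step 3 (uniform convergence of the off-diagonal series).} By Cauchy--Schwarz,
\[
\Big(\sum_{i=m}^n\lambda_i|v_i(x)v_i(y)|\Big)^2\le \Big(\sum_{i=m}^n\lambda_i v_i(x)^2\Big)\, M.
\]
Hence for each fixed $x$ the series converges absolutely, and uniformly in $y$. Call the limit $\tilde k(x,y)$; for fixed $x$ it is continuous in $y$.

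To identify the limit, fix $x$ and $u\in L^2(\D)$. Then
\[
\int_\D(\tilde k(x,y)-k(x,y))u(y)\,\dy=\sum_i\lambda_i v_i(x)\ip{u}{v_i}-[\K u](x).
\]
Since $\K u=\sum_i\lambda_i\ip{u}{v_i}v_i$ in $L^2$, and this series also converges pointwise uniformly by the same Cauchy--Schwarz bound, both terms agree. Thus $\tilde k(x,\cdot)=k(x,\cdot)$ in $L^2$, and by continuity everywhere. In particular, on the diagonal, $\sum_i\lambda_i v_i(x)^2=k(x,x)$ for every $x$.

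\textbf{Step 4 (Dini's theorem).} The partial sums $\sum_{i\le n}\lambda_i v_i(x)^2$ are continuous, increase monotonically in $n$, and converge pointwise to the continuous function $k(x,x)$ on the compact set $\D$. By Dini's theorem the convergence is uniform. Feeding this into the Cauchy--Schwarz estimate of Step 3 gives uniform Cauchy-ness of $\sum_i\lambda_i|v_i(x)v_i(y)|$ on all of $\D\times\D$. This yields absolute and uniform convergence of \eqref{equ:kernel-dec}.

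The main obstacle is Step 3: identifying the pointwise limit with $k$ everywhere, not merely almost everywhere. This is exactly where continuity of both $\tilde k(x,\cdot)$ and $k$, together with the uniform-in-$y$ convergence, is essential. The diagonal positivity argument in Step 2 is the other delicate point.
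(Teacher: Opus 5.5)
The paper does not prove Mercer's theorem. It states it as a black box and points to Gohberg, Goldberg, and Kaashoek, so there is no in-paper argument to compare with.

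Your proof is correct. It is the classical argument:
\begin{itemize}
\item nonnegativity of the remainder kernel $k_n$ on the diagonal;
\item the Cauchy--Schwarz bound $\sum_{i=m}^n\lambda_i|v_i(x)v_i(y)|\le M^{1/2}\big(\sum_{i=m}^n\lambda_i v_i(x)^2\big)^{1/2}$;
\item identification of the limit;
\item Dini's theorem.
\end{itemize}
What it buys is that the paper's later uses of Mercer become self-contained: the trace identity \eqref{equ:trace} and the uniform convergence in Theorem~\ref{thm:KL}.

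The one detail you should spell out is in Step~3. Convergence of $\sum_i\lambda_i\ip{u}{v_i}v_i$ to $\K u$ in $L^2(\D)$, together with uniform convergence, gives agreement only almost everywhere. To conclude $\sum_i\lambda_i\ip{u}{v_i}v_i(x)=[\K u](x)$ at your fixed $x$, you need two facts:
\begin{itemize}
\item $\K u$ is continuous for every $u\in L^2(\D)$. Your Step~1 estimate proves exactly this, not only for the $v_i$.
\item The uniform limit of the series is continuous.
\end{itemize}

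Alternatively, you can bypass this step. The Fourier coefficients of $k(x,\cdot)$ are $\ip{k(x,\cdot)}{v_i}=[\K v_i](x)=\lambda_i v_i(x)$. These vanish identically when $\lambda_i=0$, because $\K v_i$ is then a continuous function equal to zero in $L^2(\D)$. Hence $k(x,\cdot)=\sum_i\lambda_i v_i(x)v_i$ in $L^2(\D)$ directly, and continuity of $k(x,\cdot)$ and $\tilde k(x,\cdot)$ finishes the identification.
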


For further details on Mercer's theorem, see, e.g.,~\cite[Chapter V]{GohbergGoldberg04}.

\boldheading{Trace class operators}
Let $\H$ be a real separable Hilbert space equipped with the inner product
$\ip{\cdot}{\cdot}$ and let $\{e_i\}_{i=1}^\infty$ be an orthonormal basis of
$\H$.  Assume $\A: \H \to \H$ is a bounded positive
self-adjoint operator. We say $\A$ is \emph{trace class} if 
\begin{equation}\label{equ:trace_class}
\sum_{i=1}^\infty \ip{ \A e_i}{e_i} < \infty.
\end{equation}
Note that if~\eqref{equ:trace_class} holds for one choice of the orthonormal
basis, it holds for any other orthonormal basis, and the value of the summation 
is independent of
the basis.  In this case, we denote $\trace(\A) \defeq \sum_{i=1}^\infty \ip{ \A
e_i}{e_i}$.  If $\A$ is trace class, it is also compact. Also, letting 
$\{v_i\}_{i=1}^\infty$ be the orthonormal basis of eigenvectors of $\A$, with 
corresponding (real, non-negative) eigenvalues $\{\lambda_i\}_{i=1}^\infty$, we
have $\trace(\A) = \sum_{i=1}^\infty  \ip{ \A v_i}{v_i} = \sum_{i=1}^\infty
\lambda_i$. 

For further details on trace class operators see, e.g.,~\cite{ReedSimon72}.
Such operators play an important role in the study of random fields. 
In particular, 
as seen in the next section, 
the random fields considered in this article admit trace class
covariance operators.

\subsection{Basics from probability and random fields} \label{sec:RFs}
Herein, we let $(\Omega, \F, \P)$ be a probability space, 
where $\Omega$ is a sample space, $\F$ is a $\sigma$-algebra on $\Omega$, 
and $\P$ is a probability measure. The sample space is the set of all possible 
outcomes. The $\sigma$-algebra $\F$ is a collection of subsets of $\Omega$, 
which we call \emph{events}. This collection contains $\Omega$ and is 
closed under complements and countable unions. Every $E \in \F$ is assigned 
a probability $\P(E)$. The probability measure $\P$ 
a function from $\F$ to $[0, 1]$ that satisfies the following properties: 
$\P(\Omega) = 1$ and $\P$ 
is countably additive. The latter means
\begin{equation}
\P\left(\cup_{i=1}^\infty E_i\right) = \sum_{i=1}^\infty \P(E_i),
\end{equation}
whenever $\{E_i\}_{i=1}^\infty$ is a collection of pairwise disjoint events in $\F$.
For further details on probability spaces, see, e.g.,~\cite{Williams91}.

\boldheading{Real-valued random variables} 
We equip $\R$ with the Borel $\sigma$-algebra, $\B(\R)$, which is the
smallest $\sigma$-algebra that contains all open subsets of $\R$.
A real-valued random variable on $(\Omega, \F, \P)$ is a function $X:\Omega \to \R$ for which
\[
X^{-1}(B) \in \F, \quad \text{for all } B \in \B(\R).
\]
In parlance of measure theory, a random variable is called a \emph{measurable function}.
The expectation and variance of $X$ are, respectively, denoted by
\[
   \E[X] := \int_\Omega X(\omega) \, \P(d\omega) \quad \text{and}
   \quad \Var[X] := \E[(X - \E[X])^2].
\]
Also, 
the covariance and correlation between random variables 
$X$ and $Y$ are given by 
\[
\begin{aligned}
\Cov[X, Y] &\defeq \E[(X - \E[X])(Y - \E[Y])] \quad \text{and}\\
\Cor[Y, Z] &\defeq \frac{\Cov[Y, Z]}{\Var[Y]^{1/2} \Var[Z]^{1/2}},
\end{aligned}
\]
respectively.
We denote by 
$L^2(\Omega, \F, \P)$
the Hilbert space of real-valued
square integrable random variables on $\Omega$,
$L^2(\Omega, \F, \P) =
   \{ X : \Omega \to \R : \E\{X^2\} < \infty\}$.
This space is equipped 
with the inner product, $\ipOmega{X}{Y} = \E[XY]$
and norm $\normOmega{X} = \ipOmega{X}{X}^{1/2}$.

\boldheading{Random fields}
As before, 
let $\D \subset \R$ be a closed and bounded interval. 
A random field on $\D$ is a mapping $Z:\D \times \Omega \to
\R$ such that $Z(x, \cdot)$ is measurable for every $x \in \D$. Alternatively,
we may define a random field as a family of random variables, $\Zx:\Omega
\to \R$ with $x \in \D$, and identify $Z$ with $\{\Zx\}_{x \in \D}$. We thus write
\[
Z(x, \omega) \equiv \Zx(\omega), \quad \text{for all } (x, \omega) \in \D \times \Omega.
\]
These perspectives on a random field are both useful 
and will be used in what follows. 

A random field $Z:\D \times \Omega \to \R$ is
said to be 
\begin{itemize}
\item \emph{centered} if $\E[\Zx] = 0$ for all $x \in
\D$; 
\item \emph{second-order} if $\E[\Zx^2] < \infty$ for all $x \in \D$;
\item \emph{mean-square continuous}
if for every $x \in \D$,
\[
   \displaystyle \lim_{\eps \to 0} \E[(\ZZ{x+\eps} - \Zx)^2] = 0. 
\]
\end{itemize}

In what follows, we focus on centered, second-order, mean-square continuous random
fields.  The assumption of zero mean is made for 
convenience---non-centered processes can be centered by subtracting their mean function.
On the other hand, the second-order assumption is important; it ensures a 
well-defined covariance function. To see this, consider a centered
second-order random field $Z$, with covariance function
\[
   c(x, y) = \Cov[\Zx, \Zy] = \E[\Zx \Zy], \quad x, y \in \D.
\]  
Since $Z$ is second-order, $c(x, x) = \E[\Zx^2] < \infty$. Also, by 
the Cauchy--Schwarz inequality, 
$|c(x, y)| \leq  \E[\Zx^2]^{1/2}  \E[\Zy^2]^{1/2} < \infty$. Thus, 
the second-order assumption implies that 
$c(x, y)$ is finite for every $x, y \in \D$.

We next discuss the third assumption on $Z$---mean-square continuity.  This is a
regularity assumption on the random field that has some important 
consequences. We first consider the following well-known result, which provides
a useful characterization of mean-square continuity:
\begin{lemma}\label{lem:cont}
A centered second-order random field $\{\Zx\}_{x \in \D}$ is mean-square continuous if and
only if its covariance function is continuous
on $\D \times \D$. 
\end{lemma}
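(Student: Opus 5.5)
The proof rests on a single identity. Because $Z$ is centered, expanding the square gives, for all $x, y \in \D$,
\begin{equation*}
\E[(\Zx - \Zy)^2] = c(x,x) - 2c(x,y) + c(y,y).
\end{equation*}
Each implication of Lemma~\ref{lem:cont} then follows from this identity combined with the Cauchy--Schwarz inequality in $L^2(\Omega,\F,\P)$. Throughout, mean-square continuity is read as $\E[(\ZZ{x'} - \Zx)^2] \to 0$ as $x' \to x$ within $\D$, with one-sided limits at the endpoints.

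\emph{Continuity of $c$ implies mean-square continuity.} Fix $x \in \D$ and apply the identity with $y = x+\eps$:
\begin{equation*}
\E[(\ZZ{x+\eps} - \Zx)^2] = c(x+\eps,x+\eps) - 2c(x,x+\eps) + c(x,x).
\end{equation*}
Since $c$ is continuous at $(x,x)$, both $c(x+\eps,x+\eps)$ and $c(x,x+\eps)$ tend to $c(x,x)$ as $\eps \to 0$. The right-hand side therefore tends to $c(x,x) - 2c(x,x) + c(x,x) = 0$.

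\emph{Mean-square continuity implies continuity of $c$.} Fix $(x,y) \in \D \times \D$ and let $(x',y') \to (x,y)$. Write
\begin{equation*}
c(x',y') - c(x,y) = \E[(\ZZ{x'} - \Zx)\ZZ{y'}] + \E[\Zx(\ZZ{y'} - \Zy)].
\end{equation*}
By the Cauchy--Schwarz inequality,
\begin{equation*}
|c(x',y') - c(x,y)| \leq \normOmega{\ZZ{x'} - \Zx}\,\normOmega{\ZZ{y'}} + \normOmega{\Zx}\,\normOmega{\ZZ{y'} - \Zy}.
\end{equation*}
The factors $\normOmega{\ZZ{x'} - \Zx}$ and $\normOmega{\ZZ{y'} - \Zy}$ tend to zero by mean-square continuity at $x$ and at $y$.

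The one point that needs care is that $\normOmega{\ZZ{y'}}$ stays bounded as $y' \to y$. This follows from the triangle inequality,
\begin{equation*}
\normOmega{\ZZ{y'}} \leq \normOmega{\Zy} + \normOmega{\ZZ{y'} - \Zy},
\end{equation*}
whose right-hand side tends to $\normOmega{\Zy} < \infty$; this is where the second-order assumption is used. Hence $|c(x',y') - c(x,y)| \to 0$.

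Note that continuity is required jointly on $\D \times \D$, not separately in each variable, so $x'$ and $y'$ must be allowed to move simultaneously. The two-term splitting above handles this directly. With the identity in hand, no step presents a real obstacle.
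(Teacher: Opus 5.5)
Your proof is correct and takes essentially the same route as the paper: the same expansion $c(x',x')-2c(x',x)+c(x,x)$ for the forward direction, and an add-and-subtract decomposition followed by Cauchy--Schwarz for the converse. The only difference is that the paper splits $c(x_n,y_n)-c(x,y)$ into three terms (a product of two increments plus two increments paired with the fixed $\Zx$, $\Zy$), which avoids bounding $\normOmega{\ZZ{y'}}$; your two-term split needs that bound, and you supply it correctly via the triangle inequality.
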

\begin{proof}
See Appendix~\ref{appdx:proof_cont}.
\end{proof}

Let $Z$ be a centered, second-order, mean-square continuous random field, and
consider the following argument:
\begin{multline}\label{equ:fubini}
\E[ \| Z \|^2 ] = \int_\Omega \int_\D Z(x, \omega)^2 \, \dx\, \P(\domega) 
= \int_{\D}\int_\Omega  Z(x, \omega)^2 \, \P(\domega)\, \dx \\
= \int_\D \E[\Zx^2] \, \dx 
= \int_{\D} c(x, x)  \, \dx < \infty.
\end{multline}
In the last step, we 
have used continuity of $c$ and the fact that $\D$ is a compact set.
This is meant to show that  $Z \in L^2(\D \times \Omega)$ and 
that $Z(\cdot, \omega) \in L^2(\D)$ for almost all $\omega \in \Omega$. 
The interchange of integrals in~\eqref{equ:fubini} requires an application of 
Tonelli's theorem. The present argument is predicated on 
joint measurability of the integrand;  i.e.,
$Z$ must be measurable as a function $Z : (\D \times \Omega, \B(\D) \otimes \F) \to (\R, \B(\R))$, 
which is not a property satisfied by any arbitrary random field. 

To fill the gap in the preceding discussion, we need to utilize another implication of mean-square continuity. 
First note that mean-square continuity implies a weaker notion of 
continuity known as \emph{continuity in probability} or \emph{stochastic continuity}. 
Subsequently, we invoke a result
from probability that states a stochastically continuous 
random field $Z(x, \omega)$ admits a
measurable modification;\footnote{Random fields $Z$ and $\tilde{Z}$ are said to
be modifications of each other if $\P\big(\big\{\omega \in \Omega : Z(x, \omega)
= \tilde{Z}(x, \omega)\big\}\big) = 1$ for all $x \in \D$.} 
see, e.g.,~\cite[Proposition 3.2]{DaPratoZabczyk14}.
By passing to this
modification, joint measurability is ensured. This justifies the application of
Tonelli's theorem in the preceding discussion.  

Finally, Lemma~\ref{lem:cont} provides a bridge from theory of random fields to
operator  theory.  Specifically, by ensuring the continuity of the covariance
function, we can use Lemma~\ref{lem:cont} to conclude that the covariance
operator of a centered, second-order, mean-square continuous random field $Z$ is
a compact self-adjoint operator.  In fact, as discussed shortly, the continuity
of the covariance function ensures a stronger property: the covariance operator
is also trace class.  These issues are made precise next.

\boldheading{Covariance operators}
Let $Z:\D \times \Omega \to \R$ be a centered, second-order, mean-square continuous random field with covariance 
function $c:\D \times \D \to \R$. As before, $\D$ is assumed to be a closed and bounded 
interval in $\R$. 
The covariance operator of $Z$ is the integral operator $\C:L^2(\D) \to L^2(\D)$ defined as
\begin{equation} \label{equ:autocorr}
   [\C u](x) \defeq \int_\D c(x,y) u(y) \, \dy, \quad u \in L^2(\D).
\end{equation} 

\begin{lemma}\label{lem:Covariance}
Let $\C:L^2(\D) \to L^2(\D)$ be as in~\eqref{equ:autocorr}. Then, 
$\C$ is a compact, positive, and selfadjoint linear operator on 
$L^2(\D)$.
\end{lemma}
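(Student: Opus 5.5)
By Lemma~\ref{lem:cont}, the covariance function $c$ of a centered, second-order, mean-square continuous field is continuous on $\D \times \D$. It is also symmetric, since $c(x,y) = \E[\Zx\Zy] = \E[\Zy\Zx] = c(y,x)$. Lemma~\ref{lem:compactop} then says directly that $\C$ is a compact self-adjoint linear operator on $L^2(\D)$; linearity is clear from the integral form~\eqref{equ:autocorr}. So only positivity remains, i.e., $\ip{\C u}{u} \geq 0$ for all $u \in L^2(\D)$.

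For positivity, fix $u \in L^2(\D)$ and write
\[
\ip{\C u}{u} = \int_\D \int_\D c(x,y) u(x) u(y) \, \dy\, \dx
= \int_\D\int_\D \E[\Zx\Zy] u(x)u(y)\,\dy\,\dx .
\]
Next I would move the expectation outside the double integral to get
\[
\ip{\C u}{u} = \E\Big[ \Big( \int_\D Z(x,\cdot) u(x)\,\dx \Big)^2 \Big] = \E\big[\ip{Z}{u}^2\big] \geq 0 .
\]

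The main obstacle is justifying this interchange by Fubini's theorem on $\D\times\D\times\Omega$. As discussed before~\eqref{equ:fubini}, I would first pass to a jointly measurable modification of $Z$. This does not change $c$, since $c$ depends only on the finite-dimensional laws. Integrability then follows from Cauchy--Schwarz:
\[
\E\int_\D\int_\D |\Zx\Zy|\,|u(x)u(y)|\,\dx\,\dy \leq \Big(\int_\D \E[\Zx^2]^{1/2}|u(x)|\,\dx\Big)^2 \leq \Big(\int_\D c(x,x)\,\dx\Big)\|u\|^2 ,
\]
which is finite by~\eqref{equ:fubini}. With absolute integrability established, Fubini applies and the displayed identity holds, so $\ip{\C u}{u} \geq 0$. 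This completes the proof.
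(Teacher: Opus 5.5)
Your proposal is correct and follows essentially the same route as the paper: continuity of $c$ (Lemma~\ref{lem:cont}) and its symmetry give compactness and self-adjointness via Lemma~\ref{lem:compactop}, and positivity comes from moving the expectation outside the double integral to get $\ip{\C u}{u} = \E\big[\ip{Z}{u}^2\big] \geq 0$. Your use of a jointly measurable modification and your Cauchy--Schwarz integrability bound just make rigorous the Fubini step that the paper invokes without detail.
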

\begin{proof}
See appendix~\ref{appdx:proof_covariance}.
\end{proof}

Lemma~\ref{lem:Covariance} enables invoking the Spectral Theorem for compact self-adjoint
operators to conclude that $\C$ has an orthonormal basis of eigenvectors $\{ v_i
\}_{i=1}^\infty$ in $L^2(\D)$ with corresponding real non-negative eigenvalues $\{
\lambda_i\}_{i=1}^\infty$. From this point on, we assume the covariance 
function $c$ is such that $\C$ is a strictly positive operator; i.e., 
$\ip{\C u}{u} > 0$ for every nonzero $u \in L^2(\D)$, which ensures 
positive eigenvalues. 

Consider the 
spectral representation of $\C$,
\begin{equation}
\C u = \sum_{i=1}^\infty \lambda_i \ip{u}{v_i} v_i, \quad u \in L^2(\D).
\end{equation}
Invoking Mercer's Theorem, we have
\[
\int_\D c(x, x) \, \dx = 
\int_\D \sum_{i=1}^\infty \lambda_i v_i^2(x) \, \dx
= \sum_{i=1}^\infty \lambda_i \int_\D v_i(x)^2 \,\dx = \sum_{i=1}^\infty \lambda_i.
\]
Here, we have used the Lebesgue Monotone Convergence Theorem to 
interchange the integral and the infinite summation and the fact that 
the eigenvectors are normalized. The expression to the right hand side is 
the trace of $\C$. The above discussion shows that 
$\C$ has a finite trace and reveals the useful identity 
\begin{equation}\label{equ:trace}
   \trace(\C) = \int_\D c(x, x) \, \dx.
\end{equation}

\section{Spectral representation of random fields}\label{sec:analysis}
In this section, $Z:\D \times \Omega \to \R$ is a centered, second-order,
mean-square continuous random field with a strictly positive covariance
operator $\C$.  We now have all the tools to discuss a spectral representation for
$Z$ that resembles the finite-dimensional representation~\eqref{equ:KLE_FD}.
\begin{figure}[ht]\centering
   \includegraphics[width=.65\textwidth]{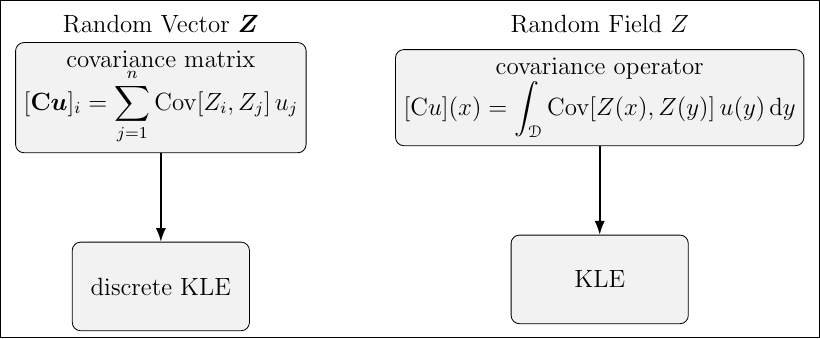}
   \caption{The discrete KLE uses the eigenpairs of $\mat{C}$ and 
   the (continuous) KLE uses those of $\C$.}
   \label{fig:diag_cov}
\end{figure}

While the discrete KLE of a random vector is tied to 
the eigenpairs of its covariance matrix, the KLE of a random field 
uses the eigenpairs of its covariance operator; see Figure~\ref{fig:diag_cov}.
The idea is to use the eigenbasis $\{v_i\}_{i=1}^\infty$ of $\C$ 
to expand
$Z$ as follows, \begin{equation} \label{equ:basic-conv} Z(x, \omega) =
\sum_{i=1}^\infty z_i(\omega) v_i(x), \quad z_i(\omega) = \int_{\mathcal{D}}
Z(x, \omega) v_i(x) \, \dx.  \end{equation} 

The key question to address here is determining  the sense in which this series
converges.  Namely, we need to understand the mode of convergence in $x$ and
$\omega$.  This  studied in Section~\ref{sec:convergence}.  In our
discussion, we first consider a simple analysis showing convergence of this
series in $L^2(\D \times \Omega)$.  This relies primarily on the properties of
$Z$ and the spectral properties of its covariance operator.  Subsequently, we
will see how invoking Mercer's theorem enables deriving a stronger result that
says
\[
   \lim_{n \to \infty} \E\Big[\Big(\Zx - \sum_{i=1}^n z_i v_i(x)\Big)^2 \Big] = 0, 
\]
uniformly in $\D$.
This also enables the following pointwise convergence result, 
which is useful from a practical perspective: for each $x \in \D$,
$\Zx = \sum_{i=1}^\infty z_i v_i(x)$, 
where the series converges in the $L^2(\Omega)$ norm.

We continue our study in Section~\ref{sec:trunc}, where we consider optimality of
the truncated KLE.  The analysis in that section is a Hilbert space analogue of
the study in Section~\ref{sec:optimality}.  
Finally, in Section~\ref{sec:GPs}, where we briefly discuss the important class of Gaussian
random fields and their KLE.  

\subsection{Convergence analysis}\label{sec:convergence}
We begin our analysis by considering 
the properties of the coefficients $\{z_i\}_{i \in \N}$ in~\eqref{equ:basic-conv}:
\begin{lemma} \label{lem:coeffs}
The coefficients $\{z_i\}_{i\in\N}$ in~\eqref{equ:basic-conv} satisfy the following:
for all $i, j \in \N$,
\begin{equation}\label{equ:KLE_coeffs_props}
\E[z_i] = 0 \quad\text{and}\quad
\E[z_i z_j] = \delta_{ij} \lambda_j.
\end{equation}
\end{lemma}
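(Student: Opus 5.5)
The plan is to mirror the finite-dimensional computation preceding Proposition~\ref{prp:KLE_FD}, replacing matrix--vector products by integrals over $\D$ and justifying each interchange of expectation and integration with Fubini's theorem. Throughout, $Z$ is taken to be the jointly measurable modification discussed in Section~\ref{sec:RFs}, so that $Z \in L^2(\D \times \Omega)$ by~\eqref{equ:fubini}. In particular $z_i(\omega) = \int_\D Z(x,\omega) v_i(x)\,\dx$ is well defined for almost every $\omega$ and is a random variable.

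\emph{Mean.} Write $\E[z_i] = \int_\Omega \int_\D Z(x,\omega) v_i(x)\,\dx\,\P(\domega)$. The integrand is integrable on $\D\times\Omega$: by Cauchy--Schwarz on the product space,
\[
\E\Big[\int_\D |Z(x,\cdot)\, v_i(x)|\,\dx\Big] \le \|v_i\|\,\E[\|Z\|^2]^{1/2} < \infty .
\]
Fubini therefore gives $\E[z_i] = \int_\D \E[\Zx]\, v_i(x)\,\dx = 0$, since $Z$ is centered.

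\emph{Second moments.} Write
\[
\E[z_i z_j] = \int_\Omega \int_\D\int_\D Z(x,\omega) Z(y,\omega) v_i(x) v_j(y)\,\dx\,\dy\,\P(\domega).
\]
To interchange the integrals, check absolute integrability. Using $\E[|\Zx\Zy|] \le c(x,x)^{1/2} c(y,y)^{1/2}$ together with Tonelli, the absolute integral is bounded by
\[
\Big(\int_\D c(x,x)^{1/2}|v_i(x)|\,\dx\Big)\Big(\int_\D c(y,y)^{1/2}|v_j(y)|\,\dy\Big).
\]
This is finite because $c$ is continuous on the compact set $\D\times\D$, hence bounded, and $v_i, v_j \in L^2(\D) \subset L^1(\D)$ on the bounded interval $\D$. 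Fubini then yields
\[
\E[z_i z_j] = \int_\D \Big(\int_\D c(x,y) v_j(y)\,\dy\Big) v_i(x)\,\dx = \ip{\C v_j}{v_i} = \lambda_j \ip{v_j}{v_i} = \lambda_j\delta_{ij},
\]
by the eigen-relation and the orthonormality of $\{v_i\}$.

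The only genuine obstacle is justifying these interchanges, which rests on joint measurability (via the measurable modification) and the integrability bounds above. The algebra itself is identical to the discrete case.
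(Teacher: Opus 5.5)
Your proof is correct and follows the same route as the paper's proof in the appendix. Both use Fubini to move the expectation inside the spatial integrals, then $\E[Z(x)Z(y)]=c(x,y)$, the eigen-relation $\C v_j=\lambda_j v_j$, and orthonormality of the $v_i$. Your explicit absolute-integrability bounds and your appeal to the jointly measurable modification supply the justification for the interchanges, which the paper only cites as ``Fubini's Theorem.''
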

\begin{proof}
See Appendix~\ref{appdx:coeffs}.
\end{proof}

We next study the convergence of~\eqref{equ:basic-conv}.
Consider the sequence of partial sums, 
\[
   Z^{[n]}(x, \omega) = \sum_{i=1}^n z_i(\omega) v_i(x), \quad n \in \N. 
\]
Note that for almost every $\omega \in \Omega$, the 
realization $Z(\cdot, \omega)$ belongs to $L^2(\D)$. 
Since $\{v_i\}_{i\in\N}$ is an orthonormal basis of $L^2(\D)$, we have
\[
\int_{\D} |Z(x, \omega) - Z^{[n]}(x, \omega)|^2 \, \dx = \sum_{i=n+1}^\infty z_i^2(\omega).
\]
Thus, 
\begin{equation}\label{equ:tail}
\int_\Omega \int_{\D} |Z(x, \omega) - Z^{[n]}(x, \omega)|^2 \, \dx \, \P(d\omega)
\!= \! 
\mathbb{E}\left[ \sum_{i=n+1}^\infty z_i^2 \right] 
\!=\!
\sum_{i=n+1}^\infty \mathbb{E}[z_i^2]
\!=\! \sum_{i=n+1}^\infty \lambda_i.
\end{equation}
Here, the interchange of the infinite sum and the integral is justified by 
the Lebesgue Monotone Convergence Theorem.
As noted previously, $\sum_{i=1}\lambda_i < \infty$. 
Thus, by~\eqref{equ:tail}, 
\[
\lim_{n\to\infty}
\int_\Omega \int_{\D} |Z(x, \omega) - Z^{[n]}(x, \omega)|^2 \, \dx \, \P(d\omega) = 0.
\]
We have just proven the following result. 
\begin{theorem}
We have $Z = \sum_{i=1}^\infty z_i v_i$,
where the series converges in $L^2(\D \times \Omega)$.
\end{theorem}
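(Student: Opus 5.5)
The plan is to reduce the claim to the tail identity \eqref{equ:tail} and then invoke the trace formula \eqref{equ:trace}. There are three steps.

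First, I would fix the setting. By passing to a jointly measurable modification of $Z$ (available by mean-square continuity, as discussed around \eqref{equ:fubini}), $Z \in L^2(\D\times\Omega)$. For almost every $\omega$, the realization $Z(\cdot,\omega)$ lies in $L^2(\D)$. Hence the coefficients $z_i(\omega)=\ip{Z(\cdot,\omega)}{v_i}$ are well defined almost surely, and they are measurable by Fubini's theorem. Each partial sum $Z^{[n]}$ is a finite sum of products $z_i(\omega)v_i(x)$, and each such product lies in $L^2(\D\times\Omega)$ because $\E[z_i^2]=\lambda_i<\infty$ by Lemma~\ref{lem:coeffs}.

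Second, I would compute the error for a fixed $\omega$ outside a null set. Since $\{v_i\}$ is an orthonormal basis of $L^2(\D)$, Parseval's identity gives
\[
\|Z(\cdot,\omega)-Z^{[n]}(\cdot,\omega)\|^2=\sum_{i>n} z_i(\omega)^2 .
\]
Integrating over $\Omega$ and using Tonelli's theorem to write the double integral as an iterated one, I would swap expectation and the infinite sum by monotone convergence, since the terms are nonnegative. Lemma~\ref{lem:coeffs} then gives $\sum_{i>n}\lambda_i$. This is exactly \eqref{equ:tail}.

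Third, I would conclude from the trace. By \eqref{equ:trace} and Mercer's theorem, $\sum_i\lambda_i=\trace(\C)=\int_\D c(x,x)\,\dx<\infty$. Tails of a convergent series of nonnegative terms tend to zero, so $\|Z-Z^{[n]}\|_{L^2(\D\times\Omega)}^2\to 0$. That is the asserted convergence.

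The only delicate point is measure-theoretic: making sense of $z_i$ and justifying the two interchanges. Both rest on the joint measurability supplied by the modification, together with nonnegativity of the integrands. Everything else is Parseval's identity plus Lemma~\ref{lem:coeffs}.
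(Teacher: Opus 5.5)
Your proposal is correct and follows the paper's argument: apply Parseval in $L^2(\D)$ for almost every fixed $\omega$, interchange expectation and the tail sum by monotone convergence, use Lemma~\ref{lem:coeffs} to obtain $\sum_{i>n}\lambda_i$, and conclude from $\trace(\C)<\infty$. Your explicit treatment of the jointly measurable modification and the measurability of the $z_i$ spells out what the paper handles in its discussion around \eqref{equ:fubini}.
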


Using Mercer's Theorem, we can prove the following stronger convergence
result.
\begin{theorem} \label{thm:KL}
The expansion
\[
\Zx = \sum_{i=1}^\infty z_i v_i(x)
\]
converges in $L^2(\Omega)$ uniformly with respect to $x \in \D$. That is,
\[
\lim_{n \to \infty} \sup_{x \in \D} \E\Big[ \big( \Zx - \sum_{i=1}^n z_i v_i(x) \big)^2 \Big] = 0.
\]
The random coefficients $z_i$ are given by $z_i = \int_\D \Zx v_i(x) \, \dx$ and satisfy~\eqref{equ:KLE_coeffs_props}.
\end{theorem}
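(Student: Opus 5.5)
The plan is to compute the mean-square truncation error at a fixed $x$ explicitly in terms of the covariance function and the eigenpairs of $\C$, and then invoke Mercer's Theorem (Theorem~\ref{thm:mercer}) to make the error uniformly small. Fix $x \in \D$ and $n \in \N$, and write $\Zxr{n} = \sum_{i=1}^n z_i v_i(x)$. Expanding the square gives
\[
\E\big[(\Zx - \Zxr{n})^2\big] = \E[\Zx^2] - 2\sum_{i=1}^n v_i(x)\,\E[\Zx z_i] + \sum_{i,j=1}^n v_i(x) v_j(x)\,\E[z_i z_j].
\]
The first term equals $c(x,x)$. By Lemma~\ref{lem:coeffs}, the last term equals $\sum_{i=1}^n \lambda_i v_i(x)^2$.

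The key intermediate identity is $\E[\Zx z_i] = \lambda_i v_i(x)$. To prove it, substitute $z_i = \int_\D Z(y)v_i(y)\,\dy$ and interchange expectation and integration, so that
\[
\E[\Zx z_i] = \int_\D \E[\Zx \Zy] v_i(y)\,\dy = \int_\D c(x,y)v_i(y)\,\dy = [\C v_i](x) = \lambda_i v_i(x).
\]
This interchange is the step that needs care. I would justify it with Fubini, using the jointly measurable modification discussed in Section~\ref{sec:RFs}. The integrability bound follows from Cauchy--Schwarz:
\[
\int_\D \E\big[|\Zx \Zy v_i(y)|\big]\,\dy \le c(x,x)^{1/2}\int_\D c(y,y)^{1/2}|v_i(y)|\,\dy < \infty,
\]
which is finite because $c$ is continuous on the compact set $\D\times\D$ and $v_i \in L^2(\D)$. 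One further point: the identity $[\C v_i](x) = \lambda_i v_i(x)$ should hold pointwise and not merely almost everywhere. I would handle this by choosing the continuous representative $v_i = \lambda_i^{-1}\C v_i$. This representative is continuous because $c$ is uniformly continuous, and it is well defined since $\lambda_i>0$. Mercer's theorem is stated for these continuous representatives.

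Combining the three terms gives
\[
\E\big[(\Zx - \Zxr{n})^2\big] = c(x,x) - 2\sum_{i=1}^n \lambda_i v_i(x)^2 + \sum_{i=1}^n \lambda_i v_i(x)^2 = c(x,x) - \sum_{i=1}^n \lambda_i v_i(x)^2.
\]
Setting $y=x$ in Theorem~\ref{thm:mercer}, the series $\sum_i \lambda_i v_i(x)^2$ converges to $c(x,x)$ uniformly in $x\in\D$. Taking the supremum over $x$ therefore yields the claimed uniform limit.

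The remaining assertions, the formula for $z_i$ and property~\eqref{equ:KLE_coeffs_props}, are already contained in Lemma~\ref{lem:coeffs}. The main obstacle is the measure-theoretic justification of the cross-term identity, namely Fubini together with the choice of continuous eigenfunction representatives. The rest of the argument is algebra followed by Mercer's theorem.
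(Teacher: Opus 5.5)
Your proposal is correct and follows the paper's proof essentially step for step. You expand the square, identify the cross term via $\E[\Zx z_i] = [\C v_i](x) = \lambda_i v_i(x)$, reduce the error to $c(x,x) - \sum_{i=1}^n \lambda_i v_i(x)^2$, and conclude by Mercer's theorem, while your Fubini bound and your choice of the continuous representatives $v_i = \lambda_i^{-1}\C v_i$ supply details the paper leaves implicit.
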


\begin{proof}
Since the covariance operator $\C$ of $Z$ is a strictly positive compact self-adjoint operator, 
the Spectral Theorem guarantees the existence of an orthonormal basis 
$\{v_i\}_{i=1}^\infty$ of $L^2(\mathcal{D})$ consisting of eigenvectors a 
of $\C$. Let $\{\lambda_i\}_{i=1}^\infty$ be the corresponding eigenvalues, 
ordered according to $\lambda_1 \ge \lambda_2 \ge \dots > 0$.
Consider
\[
\eps_n(x) \defeq \E\Big[ \big( \Zx - \sum_{i=1}^n z_i v_i(x) \big)^2 \Big], \quad x \in \D. 
\]
The rest of the proof is devoted to showing 
$\lim\limits_{n \to \infty}\eps_n(x) = 0$ uniformly in $\D$. 

We first note that
\begin{equation} \label{equ:exp-I}
\begin{aligned}
\eps_n(x) &= \E\Big[\Big(\Zx - \sum_{i=1}^n z_i v_i(x)\Big)^2 \Big] \\ 
          &= \E[\Zx^2] - 2 \E[\Zx \sum_{i=1}^n z_i v_i(x)] + 
                              \E[\sum_{i,j=1}^n z_i z_j v_i(x)v_j(x)]
\end{aligned}
\end{equation}
Now, $\E[\Zx^2] = c(x,y)$ with $c$ as in~\eqref{equ:autocorr}. Moreover,
\begin{align}
\E[\Zx \sum_{i=1}^n z_i v_i(x)] &= 
   \E\Big[\Zx \sum_{i=1}^n \Big(\int_\D \Zy v_i(y) \, \dy\Big) v_i(x)\Big]
   \notag \\ 
   &= \sum_{i=1}^n \Big(\int_\D \E[\Zx \Zy] v_i(y) \, \dy\Big) v_i(x) 
   \notag \\
   &= {\sum_{i=1}^n [\C v_i](x) v_i(x)} \\
   &= \sum_{i=1}^n \lambda_i v_i(x)^2. \label{equ:exp-II} 
\end{align}
Through a similar argument, we can show that
\begin{equation} \label{equ:exp-III}
\E\big[\sum_{i,j=1}^n z_i z_j v_i(x)v_j(x)\big] = \sum_{i=1}^n \lambda_i v_i(x)^2.
\end{equation}
Therefore, by~\eqref{equ:exp-I},~\eqref{equ:exp-II}, and~\eqref{equ:exp-III}, 
we have
$\eps_n(x) = c(x,x) - \sum_{i=1}^n \lambda_i v_i(x)v_i(x)$. Thus, 
by Mercer's theorem, $\eps_n(x) \to 0$ uniformly in $\D$.
\end{proof}

The coefficients $\{z_i\}_{i=1}^\infty$ are mean zero and 
pairwise uncorrelated. 
As done in the finite-dimensional case in Section~\ref{sec:fd-study}, we 
consider the standardized coefficients 
\[
   \xi_i = \frac{1}{\sqrt{\lambda_i}} z_i, \quad i \in \N.
\]

Since the KLE converges uniformly in $\D$, we have
pointwise convergence. 
We thus state the following familiar form of the 
KLE: for every $x \in \D$, the series
\begin{equation}\label{equ:convL2}
Z(x, \omega) = \sum_{i=1}^\infty \sqrt{\lambda_i} \xi_i(\omega) v_i(x)
\end{equation}
converges in $L^2(\Omega)$.

\subsection{Truncation and optimality}\label{sec:trunc}
Consider the truncated KLE,
\begin{equation}\label{equ:truncatedKLE}
Z^{[r]}(x, \omega) \defeq \sum_{i=1}^r \sqrt{\lambda_i} \xi_i(\omega) v_i(x),
\end{equation} 
for $r \in \N$.
Determining a truncation level $r$ may be done analogously to the approach considered
in Section~\ref{sec:fd-study}.  This will be discussed
further later. Here, we discuss an optimality result for the KLE, which is 
the Hilbert space analogue of Proposition~\ref{prp:optimality}.

Given a truncation level $r \in \N$, we represent  
the truncated KLE as  
\begin{equation}\label{equ:KLEr}
\Zxr{r}(\omega) = \sum_{i=1}^r \ip{Z(\cdot, \omega)}{v_i} v_i(x).
\end{equation}
Further, 
given an orthonormal basis,
$U = \{u_i\}_{i=1}^\infty \subset L^2(\D)$, we consider the series 
expansion 
$\Zx(\omega) = \sum_{i=1}^\infty \ip{Z(\cdot, \omega)}{u_i} u_i(x)$, 
and the corresponding truncated version,
\[
\Zxr{r,U}(\omega) = \sum_{i=1}^r \ip{Z(\cdot, \omega)}{u_i} u_i(x).
\]
The following result states the well-known~\cite{Ghanem,Knio10} optimality of the
truncated KLE.
\begin{theorem}\label{thm:KLE_optimality}
For every orthonormal basis $U = \{u_i\}_{i=1}^\infty$ 
of $L^2(\D)$ and $r \in \N$,
\[
\E\Big[ \int_\D \big(\Zx -  \Zxr{r}\big)^2 \, \dx\Big] \leq 
\E\Big[ \int_\D \big(\Zx -  \Zxr{r, U}\big)^2 \, \dx\Big]. 
\]
\end{theorem}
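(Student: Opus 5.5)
The plan mirrors the finite-dimensional argument behind Proposition~\ref{prp:optimality}, with the Ky Fan Maximum Principle replaced by its Hilbert space version, Proposition~\ref{prp:KyFanHilbert}.

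\textbf{Step 1: the error for a general basis.} Fix an orthonormal basis $U = \{u_i\}_{i=1}^\infty$ and let $\alpha_i(\omega) \defeq \ip{Z(\cdot,\omega)}{u_i}$. For almost every $\omega$ we have $Z(\cdot,\omega)\in L^2(\D)$, so Parseval's identity gives
\[
\int_\D \big(\Zx - \Zxr{r,U}\big)^2\,\dx = \sum_{i=r+1}^\infty \alpha_i(\omega)^2 .
\]
Taking expectations and using the Monotone Convergence Theorem to interchange sum and expectation, exactly as in~\eqref{equ:tail}, yields $\sum_{i=r+1}^\infty \E[\alpha_i^2]$.

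\textbf{Step 2: second moments of the coefficients.} Next I compute $\E[\alpha_i^2] = \ip{\C u_i}{u_i}$. Writing $\alpha_i^2 = \int_\D\int_\D \Zx\Zy\, u_i(x)u_i(y)\,\dx\,\dy$, one applies Fubini to move the expectation inside and obtains $\int_\D\int_\D c(x,y)u_i(x)u_i(y)\,\dx\,\dy$. Fubini is justified by the joint measurability discussed in Section~\ref{sec:RFs} together with the bound
\[
\E\int_\D\int_\D |Z(x)Z(y)u_i(x)u_i(y)|\,\dx\,\dy \le \E\big[\|Z\|^2\big]\,\|u_i\|^2 < \infty,
\]
which follows from Cauchy--Schwarz and~\eqref{equ:fubini}. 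This is the same computation as in Lemma~\ref{lem:coeffs}.

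\textbf{Step 3: rewriting the error with the trace.} Since $\C$ is positive and trace class, $\sum_{i=1}^\infty \ip{\C u_i}{u_i} = \trace(\C)$ independently of the basis, and the series converges. Hence
\[
\eps(U,r) \defeq \E\Big[\int_\D \big(\Zx - \Zxr{r,U}\big)^2\,\dx\Big] = \trace(\C) - \sum_{i=1}^r \ip{\C u_i}{u_i}.
\]
Taking $U$ to be the eigenbasis $\{v_i\}$ gives $\eps(V,r) = \trace(\C) - \sum_{i=1}^r \lambda_i$.

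\textbf{Step 4: conclusion.} Subtracting the two expressions,
\[
\eps(U,r) - \eps(V,r) = \sum_{i=1}^r \lambda_i - \sum_{i=1}^r \ip{\C u_i}{u_i}.
\]
Proposition~\ref{prp:KyFanHilbert}, applied to the orthonormal set $\{u_1,\dots,u_r\}$ and to $\C$, which Lemma~\ref{lem:Covariance} shows is positive, compact and self-adjoint, says this difference is nonnegative. That proves the theorem.

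\textbf{Main obstacle.} The only delicate step is Step 2, specifically the interchange of expectation with the double integral and the subsequent use of basis-independence of the trace. Both rest on the measurable modification of $Z$ and on the integrability estimate~\eqref{equ:fubini}. Once these are in place, the rest is routine.
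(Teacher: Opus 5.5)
Your proof is correct and follows the paper's argument essentially step for step. Like the paper, you use Parseval and monotone convergence to write the error as $\sum_{i=r+1}^\infty \ip{\C u_i}{u_i}$, you use the identity $\E[\ip{Z(\cdot,\omega)}{u_i}^2] = \ip{\C u_i}{u_i}$, you rewrite the error via $\trace(\C)$, and you conclude with Proposition~\ref{prp:KyFanHilbert}; you justify that identity's Fubini step more explicitly than the paper, which defers to the argument of Lemma~\ref{lem:Covariance}. One small quibble: basis-independence of the trace is a purely operator-theoretic fact for positive trace-class operators and does not depend on the measurable modification, so your ``main obstacle'' remark slightly overstates what rests on it.
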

\begin{proof}
By a similar calculation to that in~\eqref{equ:tail},
\[
\E\Big[ \int_\D \big(\Zx -  \Zxr{r}\big)^2 \, \dx\Big] = \sum_{i=r+1}^\infty \lambda_i
= \trace(\C) - \sum_{i=1}^r \lambda_i.
\]
We next note that 
\begin{multline*}
\E\Big[ \int_\D \big(\Zx -  \Zxr{r, U}\big)^2 \, \dx\Big]
= 
\E\Big[ \int_\D \big( \sum_{i=r+1}^\infty \ip{Z(\cdot, \omega)}{u_i} u_i(x)  \big)^2 \, \dx\Big]\\
=
\E\Big[ \sum_{i=r+1}^\infty \ip{Z(\cdot, \omega)}{u_i}^2 \Big]
=
\sum_{i=r+1}^\infty \E\big[ \ip{Z(\cdot, \omega)}{u_i}^2 \big]
=
\sum_{i=r+1}^\infty \ip{\C u_i}{u_i}.
\end{multline*}
In the last step  
we used $\E\big[ \ip{Z(\cdot, \omega)}{u_i}^2 \big] 
= \ip{\C u_i}{u_i}$, which 
can be justified by similar arguments as those in 
the proof of Lemma~\ref{lem:Covariance}; cf.\ Appendix~\ref{appdx:proof_covariance}.
Therefore, 
\[
\begin{aligned}
\E\Big[ \int_\D \big(\Zx -  \Zxr{r, U}\big)^2 \, \dx\Big] - 
&\E\Big[ \int_\D \big(\Zx -  \Zxr{r}\big)^2 \, \dx\Big]\\ 
&= \sum_{i=r+1}^\infty \ip{\C u_i}{u_i} - \sum_{i=r+1}^\infty \lambda_i \\
&= \trace(\C) - \sum_{i=1}^r \ip{\C u_i}{u_i} - \trace(\C) + \sum_{i=1}^r \lambda_i \\
&= \sum_{i=1}^r \lambda_i - \sum_{i=1}^r \ip{\C u_i}{u_i}  \geq 0, 
\end{aligned}
\]
where the final inequality follows from Proposition~\ref{prp:KyFanHilbert}.
\end{proof}

\subsection{Gaussian random fields}\label{sec:GPs}
Here, we briefly consider the Gaussian
random fields, which play a central role in probability theory and uncertainty
quantification.  As discussed shortly, Gaussian random fields possess additional
structural properties that make them particularly convenient to work with.

A random field $Z:\D \times \Omega \to \R$ is called a Gaussian 
random field, or a Gaussian process, if for every $\{x_1, \ldots, x_N\} \subset \D$, 
the vector 
\begin{equation}\label{equ:Zdef}
\vec{Z}(\omega) \defeq [Z(x_1, \omega) \; Z(x_2, \omega) \; \cdots \; Z(x_N, \omega)]^\tran
\end{equation}
follows a multivariate Gaussian distribution. The KLE of a 
mean-square continuous Gaussian random field has two key features in addition 
to the ones proven in the previous subsections. Namely, for such  
Gaussian random fields 
\begin{enumerate}[label=(\roman*)]
\item the coefficients $\{z_i\}_{i=1}^\infty$ are independent standard normal random variables;
\item for each $x \in \D$, the series~\eqref{equ:convL2} converges almost surely.
\end{enumerate}
We refer to~\cite[Chapter 1]{AshGardner75} for proofs of these results.  Note
that (i) makes generating realizations of a Gaussian random field using a 
truncated KLE quite straightforward: upon computing the eigenpairs of the 
covariance operator, we only need to generate $r$ standard normal random
variables and evaluate the truncated KLE.  

Another key property of a Gaussian random field is its unique 
characterization by its mean and covariance
function; see, e.g.,~\cite[Section 1.6]{Adler10}. In particular, given a continuous covariance function 
$c:\D \times \D \to \R$, 
we can define a centered Gaussian random field $Z$. 
Note that, in this case,
the random vector $\vec Z$ defined 
in~\eqref{equ:Zdef} is distributed according to the multivariate Gaussian 
law $N(\vec{0}, \mat{C})$ with 
$C_{ij} = c(x_i, x_j)$ for $i, j \in \{1, \ldots, N\}$. For example, given  
$x_1$, $x_2$ in $\D$,
\[
\begin{bmatrix} Z(x_1, \omega) \\ Z(x_2, \omega)\end{bmatrix}
\sim 
N\left(
\begin{bmatrix} 0 \\ 0 \end{bmatrix},
\begin{bmatrix}
c(x_1, x_1) & c(x_1, x_2) \\
c(x_2, x_1) & c(x_2, x_2)
\end{bmatrix}
\right).
\]

\section{Discretization}\label{sec:discretization}
To form the KLE of a random field $Z$, with covariance function 
$c$, 
we need to solve the following eigenvalue problem:
find $\{\lambda_i\}_{i \in \N}$ and the 
corresponding orthonormal basis of eigenvectors 
$\{v_i \}_{i \in \N} \subset L^2(\D)$ that satisfy 
\begin{equation}\label{equ:eigenvalue_problem}
\int_\D c(x, y) v_i(y) \,\dy = \lambda_i v_i(x),  \quad i \in \N.
\end{equation}

To enable numerical computations, we need to discretize the eigenvalue
problem~\eqref{equ:eigenvalue_problem}. There are several approaches for doing
this.  We discuss one approach here---\Nystrom{} method.  In this approach, we
use quadrature to
discretize the integral in~\eqref{equ:eigenvalue_problem}.  
Consider a quadrature formula 
with nodes and weights $\{(x_k, w_k)\}_{k=1}^n$ used 
to approximate $\int_\D f(x) \,\dx$, where 
$f:\D \to \R$ is an integrable function: 
\[
\int_\D f(x) \, \dx \approx \sum_{k=1}^n w_k f(x_k).
\]

Discretizing the integral in~\eqref{equ:eigenvalue_problem} and 
evaluating the resulting equation at the quadrature nodes, we obtain the discretized eigenvalue problem
\begin{equation}\label{equ:discrete_eigenproblem_components}
   \sum_{l = 1}^n w_l c(x_k, x_l) v_i(x_l) = \lambda_i v_i(x_k), \quad k = 1, \ldots, n.
\end{equation}
Note that, with a slight abuse of notation, we continue to use $\{(\lambda_i, v_i)\}_{i=1}^n$
to denote the approximate eigenpairs. 

To make matters concrete, we next describe the discretized eigenvalue problem 
in matrix-vector notation. 
Let the vector $\vec{v}_i$ be defined by 
\[
\vec{v}_i = [v_i(x_1) \; v_i(x_2) \; \cdots  \; v_i(x_n)], 
\]
and 
define the matrices $\mat{C}$ and $\mat{W}$ according to 
$C_{kl} = c(x_k, x_l)$, $k, l \in \{1, \ldots, n\}$, and 
$\mat{W} = \operatorname{diag}(w_1, w_2, \ldots, w_n)$, respectively.
We can state the discretized eigenvalue problem as follows: 
\[
    \mat{C}\mat{W} \vec{v}_i = \lambda_i \vec{v}_i, \quad
    i = 1, 2, \ldots, n.
\]
This can be rewritten as the symmetric eigenvalue problem,
\begin{equation}\label{equ:discrete_eigenproblem_symmetric}
    \mat{W}^{1/2} \mat{C} \mat{W}^{1/2} \tilde{\vec{v}}_i = \lambda_i \tilde{\vec{v}}_i,
\end{equation}
with $\tilde{\vec{v}}_i = \mat{W}^{1/2}\vec{v}_i$. 
Upon solving this eigenvalue problem,
we obtain eigenvalues $\{\lambda_i\}_{i=1}^n$ and eigenvectors $\{\tilde{\vec{v}}_i\}_{i=1}^n$, with
$\tilde{\vec{v}}_i^\tran \tilde{\vec{v}}_j = \delta_{ij}$.  We can then obtain the eigenvectors
$\{\vec{v}_i\}_{i=1}^n$ using
$\vec{v}_i = \mat{W}^{-1/2} \tilde{\vec{v}}_i$. Also note that,
$\vec{v}_i^\tran \mat{W} \vec{v}_j = \delta_{ij}$ for $i,j \in \{1, \ldots, n\}$.

The eigenpairs obtained by solving the discretized eigenvalue problem approximate
those of the continuous eigenvalue problem~\eqref{equ:eigenvalue_problem}.
The present discussion raises a number of practical considerations.  These
include the choice of the quadrature formula and its accuracy.  Such choices
must be made in accordance with the number of desired terms in the truncated
KLE. We revisit these issues in Section~\ref{sec:computational_considerations}.

\section{Computational modeling with Gaussian random fields}\label{sec:numerics}
Gaussian random fields are commonly used in uncertainty quantification.  When
using such fields in practice, careful attention must be paid to an
interconnected set of modeling, mathematical, and computational 
considerations.
We highlight some of these issues in
Sections~\ref{sec:modeling_considerations} through \ref{sec:computational_considerations}.
To provide further insight, we consider several illustrative
numerical experiments in Section~\ref{sec:gaussian_numerics}.  It is important to note
that while this section is focused on Gaussian random fields, the discussions
that follow have fundamental implications for computational modeling with random
fields in general. 

\subsection{Modeling considerations}\label{sec:modeling_considerations}
Such considerations are inherently application-dependent.
To illustrate some key modeling issues, we focus on a specific 
example: modeling the permeability field of a heterogeneous medium.
As before, we 
focus on a one-dimensional domain $\D = [a, b]$. Following common practice,
we define the permeability field as the function 
$\kappa: \D \times \Omega \to \R$, given by 
$\kappa(x, \omega) = \exp(Y(x, \omega))$, where 
\begin{equation}\label{equ:random_field}
    Y(x, \omega) = \bar{z}(x) + Z(x, \omega). 
\end{equation}

Here, $\bar{z}$ is the mean of the process and $Z(x, \omega)$ is a centered mean
square continuous Gaussian random field. To specify  $\bar{z}$, which describes 
the mean field behavior, one may rely on prior knowledge or modeling
assumptions.  On the other hand, $Z$ encodes our modeling assumptions
regarding the fluctuations of the material properties.  
Note also that the present construction ensures positivity of the permeability field $\kappa$.

Since $Z$
is mean zero, it is characterized uniquely by its covariance function.  To
ensure mean-square continuity, we choose a continuous covariance function. 
There are various options for doing so. A simple choice is the
exponential covariance function,
\begin{equation}\label{equ:classical}
c(x, y) \defeq \sigma^2 \exp\left( -\frac{|x - y|}{\ell}\right), \quad x, y \in \D.
\end{equation}

Here, $\sigma^2 > 0$ specifies the pointwise variance of the field, and $\ell >
0$ is a scale parameter controlling the correlation length. From a physical
perspective, $\sigma$ and $\ell$ can be used to model heterogeneous properties
of a medium.  By controlling the size of the pointwise variance, $\sigma$
specifies level of contrast or variations in the materials properties.  A
large $\sigma$ indicates the high likelihood of large variations in the
permeability field.  On the other hand, the correlation length parameter
controls how rapidly the material properties can change locally.  Note
that the correlation length should be considered relative to the size of the
domain.  A small $\ell$ relative to the domain size (characteristic length
scale) models a highly disordered medium where material properties can change
drastically in nearby points. 

\subsection{Mathematical considerations}
Consider the random field $Y(x, \omega)$ 
in~\eqref{equ:random_field} more broadly and beyond its specific application in
modeling heterogeneous material properties.  The covariance function of $Z$ defined
in~\eqref{equ:classical} yields a Gaussian random field with almost surely
continuous (but nowhere differentiable) realizations.  There are various other choices for covariance
functions. A widely used option is to choose from among the family of
Mat\'ern covariance functions.  We refer to~\cite[Chapter
4]{RasmussenWilliams06} or~\cite[Chapter 2]{Stein99} for details. The class of
Mat\'ern covariance functions enable controlling regularity properties of 
the realizations---one can specify varying degrees of differentiability for the
realization of $Z$.  Note that the covariance
function~\eqref{equ:classical} is indeed a member of the Mat\'ern family of
covariance function.  For further discussions and examples of covariance
functions, we refer to \cite[Chapter 4]{RasmussenWilliams06}.

The mean function $\bar z$ also deserves some discussion.
To ensure mean-square continuity of $Y$,
in addition to mean 
square continuity of $Z$, 
we need 
$\bar{z}$ to be a continuous function on $\D$. Note specifically that 
\[
\E\Big[\big(Y(x+\eps,\cdot) - Y(x, \cdot)\big)^2\Big] 
= \big(\bar{z}(x+\eps) - \bar{z}(x)\big)^2 + \E\Big[\big(Z(x+\eps, \cdot) - Z(x, \cdot)\big)^2\Big].
\] 
Thus, assuming $\bar{z} \in C(\D)$ and that $Z$ is a centered mean-square
continuous random field, we have that $\E\big[\big(Y(x+\eps,\cdot) - Y(x, \cdot)\big)^2\big] \to
0$ as $\eps \to 0$. 
More broadly, to ensure $Y(x, \omega)$ has a desired degree of spatial 
regularity, $\bar{z}$ and $Z$ must both satisfy that regularity requirement 
independently. For example, when using a Mat\'ern covariance function that
ensures the realizations of $Z$ are almost surely differentiable, we must choose
a differentiable $\bar{z}$ as well. 

The present discussion points to a fundamental difference between working with
Gaussian random vectors and Gaussian random fields. In the case of random
vectors, the covariance matrix is a purely statistical construct---it describes
the pointwise variance and correlation structure. On the other hand, working
with random fields adds a new dimension: the covariance function determines the
spatial regularity of the realizations in addition to the statistical properties
of the field. When incorporating random fields in PDE models, it is important to
account for such  regularity properties.

\subsection{Computational considerations}\label{sec:computational_considerations}
In practical computations, one considers truncated KLEs of 
random fields. We thus need a pragmatic approach to specify a truncation level. 
Furthermore, to enable computations, the KLE must be discretized.  The choices
of truncation and discretization scheme are closely related. Namely, the desired
truncation level dictates the level of resolution in one's choice of
discretization. Conversely, a fixed discretization limits the number of terms in
the KLE that can be approximated reliably.  We
discuss these issues briefly in this section.

The computational cost of solving the (discretized) eigenvalue problem for
the eigenpairs of the covariance operator is also a key consideration for
problems on complex two- or three-dimensional domains.  In such cases, the
discretized covariance operator is a dense high-dimensional matrix, and a
large-scale eigenvalue problem must be solved.  While a detailed discussion of
the associated numerical methods is beyond the scope of our discussion, 
addressing this challenge is crucial in large-scale applications.
We mention a couple of references that provide complementary
perspectives in this direction.  We point to~\cite{Khoromskij09}, which utilizes
$\mathcal{H}$-matrices to enable fast computation of the KLE. See
also~\cite{LindgrenRue11}, which establishes a tractable approach for computing
with Gaussian random fields with Mat\'ern covariance functions by exploiting
their connections to Gaussian Markov random fields and stochastic PDEs.

\boldheading{Truncation}
Here, we focus on determining a truncation level $r$ for a centered
random field $Z$ as in~\eqref{equ:truncatedKLE}.  
The average pointwise variance of $Z$ is given by 
\[
    \frac{1}{b-a} \int_\D \Var[Z(x, \cdot)] \,\dx 
    =  \frac{1}{b-a} \int_\D c(x, x) \, \dx 
    = \frac{\trace(\C)}{b-a}, 
\]
where $\C$ is the covariance operator of $Z$, and 
we have used~\eqref{equ:trace} in the last step.
It is also straightforward to show that $\Var[Z^{[r]}(x, \cdot)] = \sum_{i=1}^r \lambda_i v_i(x)^2$.
Thus, the average variance of $Z^{[r]}$ is given by 
\[
\frac{1}{b-a} \int_\D \Var[Z^{[r]}(x, \cdot)] \,\dx =  
\frac{1}{b-a}\sum_{i=1}^r \lambda_i \int_\D v_i(x)^2\ \,\dx = 
\frac{1}{b-a} \sum_{i=1}^r \lambda_i, 
\]
where 
we have also used the fact that $\int_\D v_i^2(x) \, \dx = \ip{v_i}{v_i} = 1$.

The present discussion shows that the fraction of the average variance 
captured by the $r$-term truncation of the KLE is given by 
\begin{equation}\label{equ:ratio}
\rho(r) \defeq 
\frac{\sum_{i=1}^r \lambda_i}{\sum_{i=1}^\infty \lambda_i}
=
\frac{\sum_{i=1}^r \lambda_i}{\trace(\C)}
=
\frac{\sum_{i=1}^r \lambda_i}{\int_\D c(x, x)\, \dx}.
\end{equation}

The final expression in~\eqref{equ:ratio} is convenient for computations.
Namely, given an analytic expression for the covariance function, computing an
accurate estimate of $\int_\D c(x, x) \, \dx$ is easier than estimating the
denominator by computing a large number of eigenvalues.  In practice, we may
choose $r$ such that $\rho(r)$ is larger than a desired ratio. For example, 
$\rho(r) > 0.95$ implies over 95\% of the average
variance is captured by $Z^{[r]}$.  Note that this truncation
strategy is not restricted to Gaussian random fields and applies to
any second-order, mean-square continuous random fields.

\boldheading{Discretization}
As discussed in Section~\ref{sec:discretization}, to compute the spectral
decomposition of the covariance operator numerically, we need to discretize the eigenvalue
problem.  This can be done using several approaches. One option is \Nystrom{}
method considered in Section~\ref{sec:discretization}.  Another possibility is 
to perform a  
finite element discretization; this is natural when using random
fields to model uncertain field quantities appearing in PDEs 
on complex geometries.  For an overview of different approaches to
discretization of KLEs, we refer to~\cite{BetzPapaioannouStraub14}. Our
discussion will be limited to \Nystrom{} method. This is sufficient to expose
the key computational considerations regarding the discretization of KLEs. 

The resolution of the discretization is linked closely to the number of terms
retained in the truncated KLE~\eqref{equ:truncatedKLE}. In this case, we
approximate the eigenpairs $\{(\lambda_i, v_i)\}_{i=1}^r$ by solving a
discretized eigenvalue problem of the form~\eqref{equ:discrete_eigenproblem_symmetric}.
Consequently, we need a sufficiently fine grid to accurately compute the leading
eigenvalues and the corresponding eigenvectors. This is illustrated numerically
in the next section. 

\subsection{Numerical illustrations}\label{sec:gaussian_numerics}
Here, we provide a few numerical experiments to illustrate the issues discussed
in the previous subsections.  We focus on a specific example. We let $\D = [0,
1]$ and consider a centered Gaussian random field $Z(x, \omega)$ with covariance
function defined as in~\eqref{equ:classical} with 
$\sigma = 1$.
The eigenvalues and eigenvectors of 
the resulting covariance operator can be characterized analytically~\cite{Ghanem}. 
However, in the present numerical study, we rely on \Nystrom{} method 
to further illustrate the numerical issues in computing the KLE in practice. 

\boldheading{The correlation structures}
With our choice of the covariance function, 
$\Var[\Zx] = c(x, x) = 1$, for each $x \in \D$. Thus, 
for $x, y \in \D$,
\[
   \Cor[\Zx \Zy] = \E[ \Zx \Zy] = c(x, y).
\] 
\begin{figure}[!ht]\centering
   \includegraphics[width=.45\textwidth]{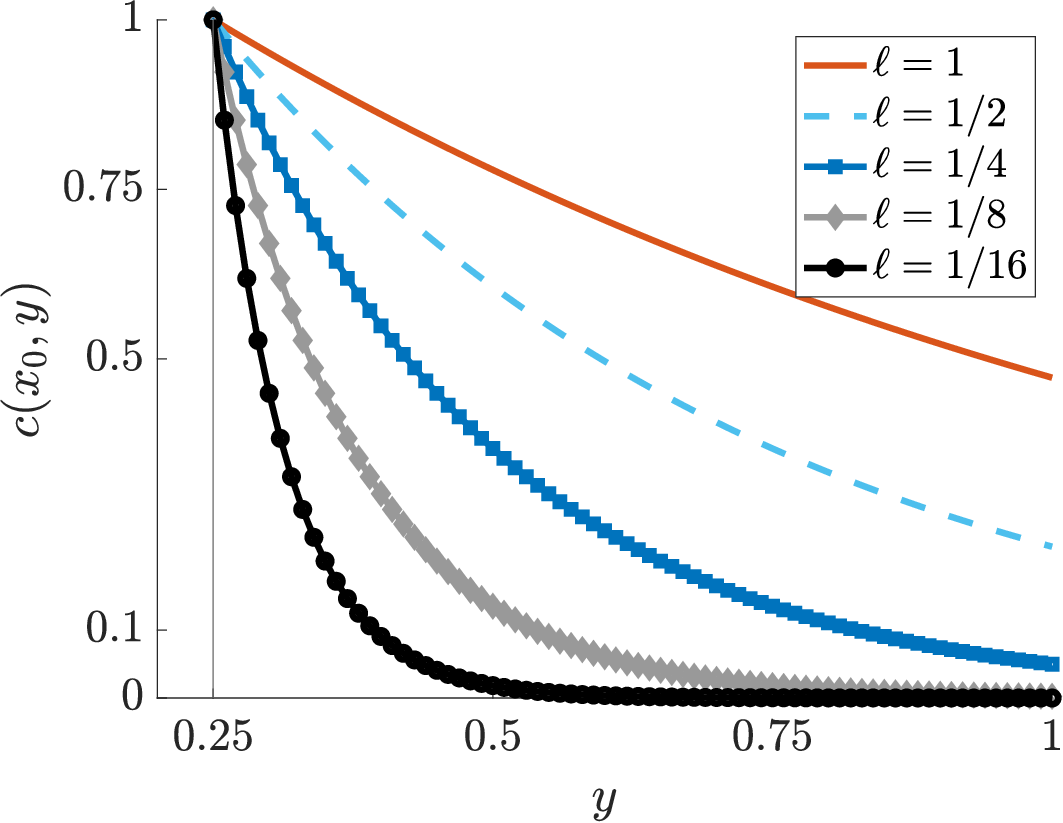}
   \includegraphics[width=.45\textwidth]{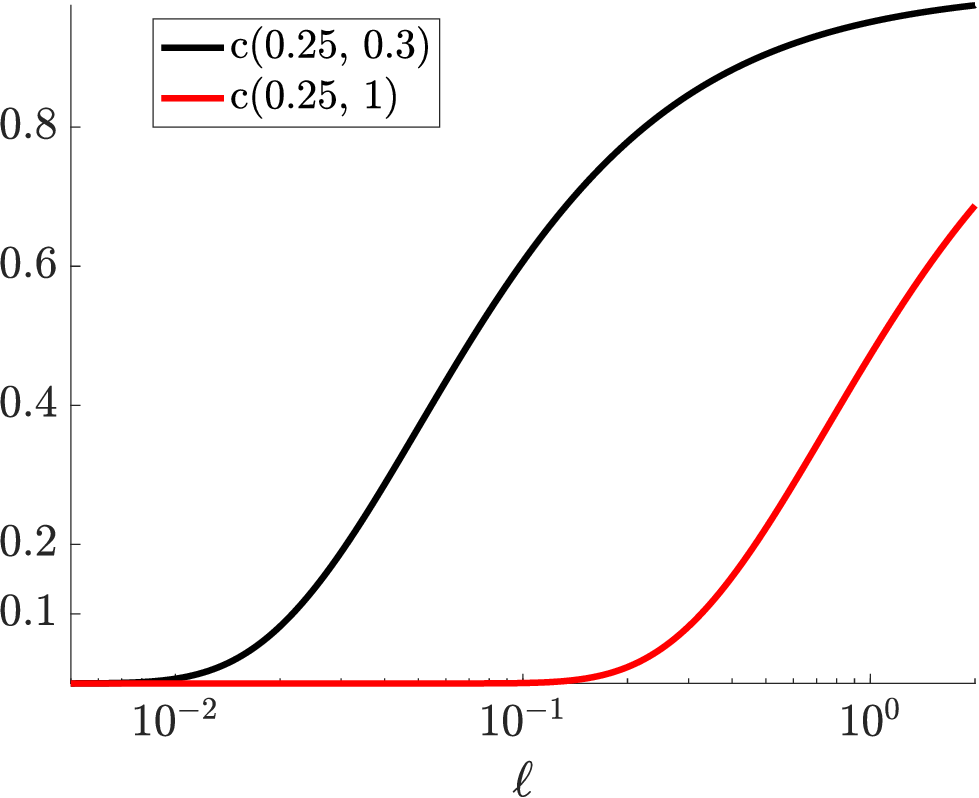}
   \caption{Left: $c(x_0, y)$, with $x_0 = 0.25$ and $y \in [x_0, 1]$, for different 
   choices of $\ell$. Right: 
   $c(0.25, 0.3)$ and $c(0.25. 1)$ 
   as a function of the correlation length parameter $\ell$.}
   \label{fig:corstudy}
\end{figure}

We begin by studying the impact of the correlation length parameter $\ell$ on $Z$.
In Figure~\ref{fig:corstudy}~(left), we fix $x_0 = 0.25$ and consider the
function $c(x_0, y)$, for $y \in [x_0, 1]$, for several choices of $\ell$.
Note that for $\ell = 1$, $\ZZ{x_0}$ is strongly correlated to $\Zy$ even for
$y$ near 1. On the other hand, in the case of $\ell = 1/16$, $\ZZ{x_0}$ 
and $\Zy$ are nearly uncorrelated for $y \geq 0.5$. A complementary perspective is
provided in Figure~\ref{fig:corstudy}~(right), where we depict $c(0.25, 0.3)$
and $c(0.25, 1)$ as the correlation length parameter $\ell$ varies.  Note
that $\ZZ{0.25}$ and $\ZZ{0.3}$ are nearly uncorrelated for small $\ell$.  On
the other hand, for large $\ell$, 
even $\ZZ{0.25}$ and $\ZZ{1}$ are strongly correlated.
These results show that a large $\ell$ results in significant
correlation between $\Zx$ and $\Zy$ even for distant $x$ and $y$, whereas a small $\ell$
yields arbitrarily small correlations for nearby points.

We next illustrate the role of the correlation length by considering the
realizations of the random field, which we generate using truncated KLEs with
100 terms.
To compute these KLEs numerically, we
rely on \Nystrom{} method, and discretize the eigenvalue
problem~\eqref{equ:eigenvalue_problem} with a composite trapezoid rule
on a sufficiently fine grid.
In Figure~\ref{fig:realizations}, we depict 5 realizations of the random field
with $\ell = 1/16$~(left) and $\ell = 1$~(right). Note that the smaller correlation length
results in drastic local variations in the values of $Z(x, \omega)$.
This also illustrates a point made in
Section~\ref{sec:modeling_considerations}: when using random fields to model
physical material properties, the correlation length parameter can be used to
control the degree of heterogeneity of the medium. We can also control the
contrast in the constituent material properties 
statistically, by using a different pointwise variance $\sigma^2$ or even a
space-dependent $\sigma^2$. 
\begin{figure}[!ht]\centering
   \includegraphics[width=.4\textwidth]{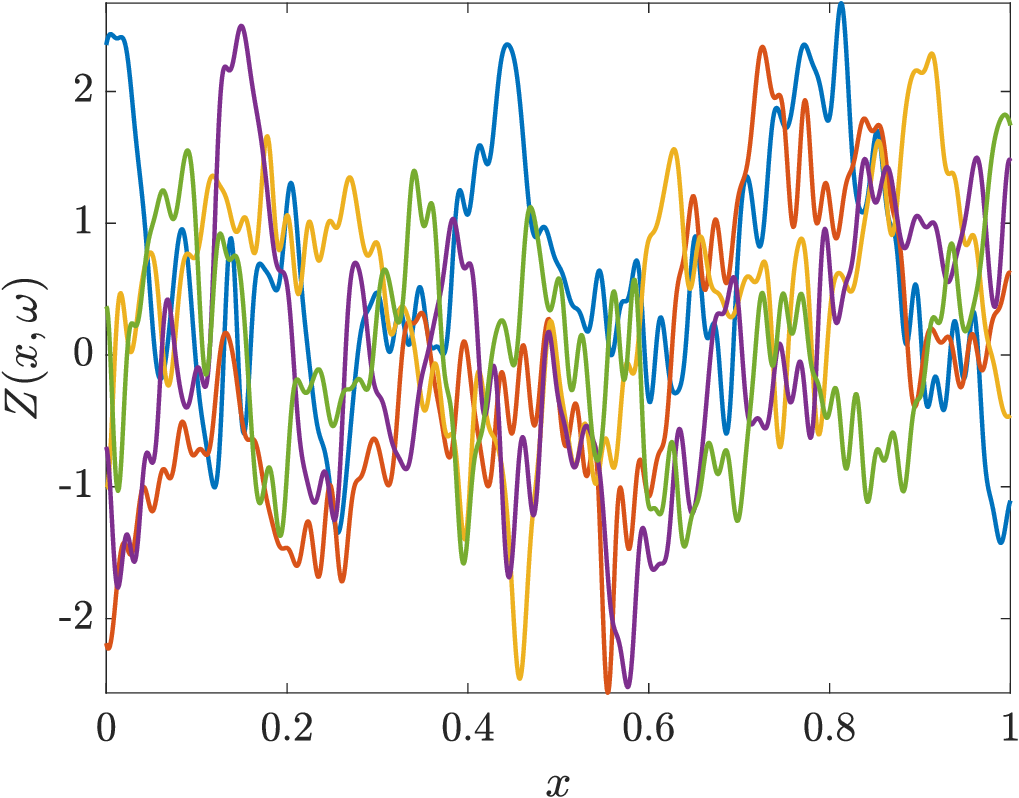}
   \includegraphics[width=.4\textwidth]{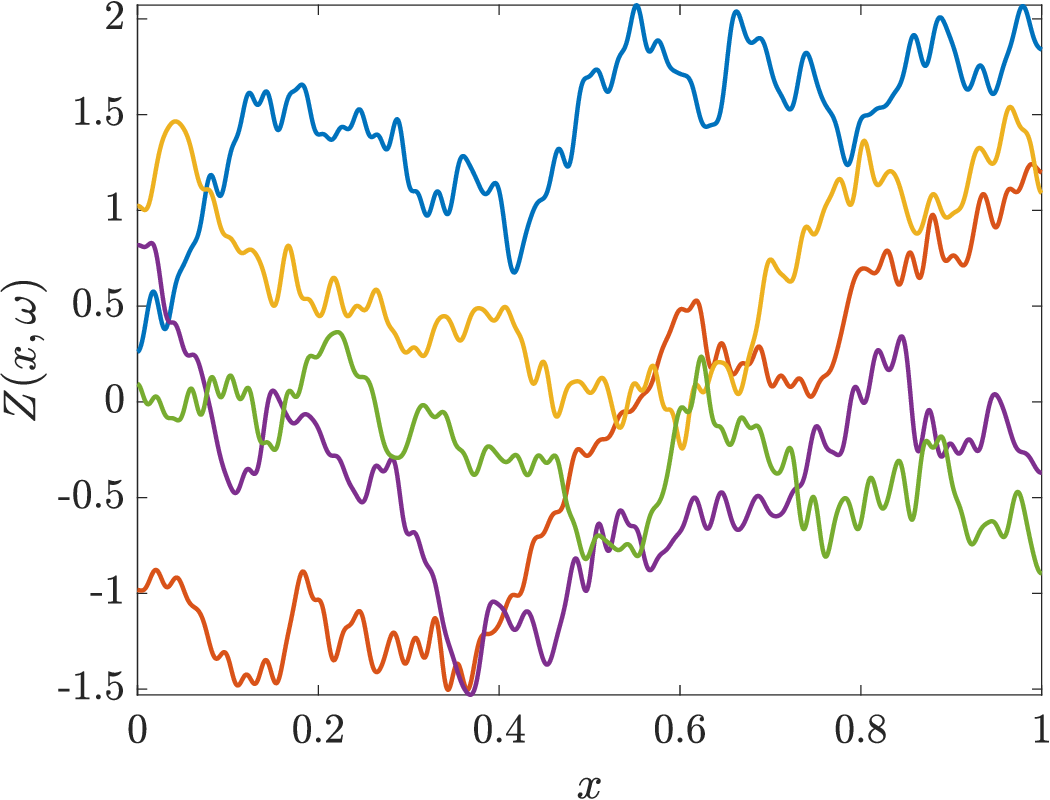}
   \caption{Realization of Gaussian random fields with $\ell = 1/16$~(left) and $\ell = 1$~(right), 
   generated using truncated KLEs with 100 terms.}
   \label{fig:realizations}
\end{figure}

\begin{figure}[ht]\centering
   \includegraphics[width=.45\textwidth]{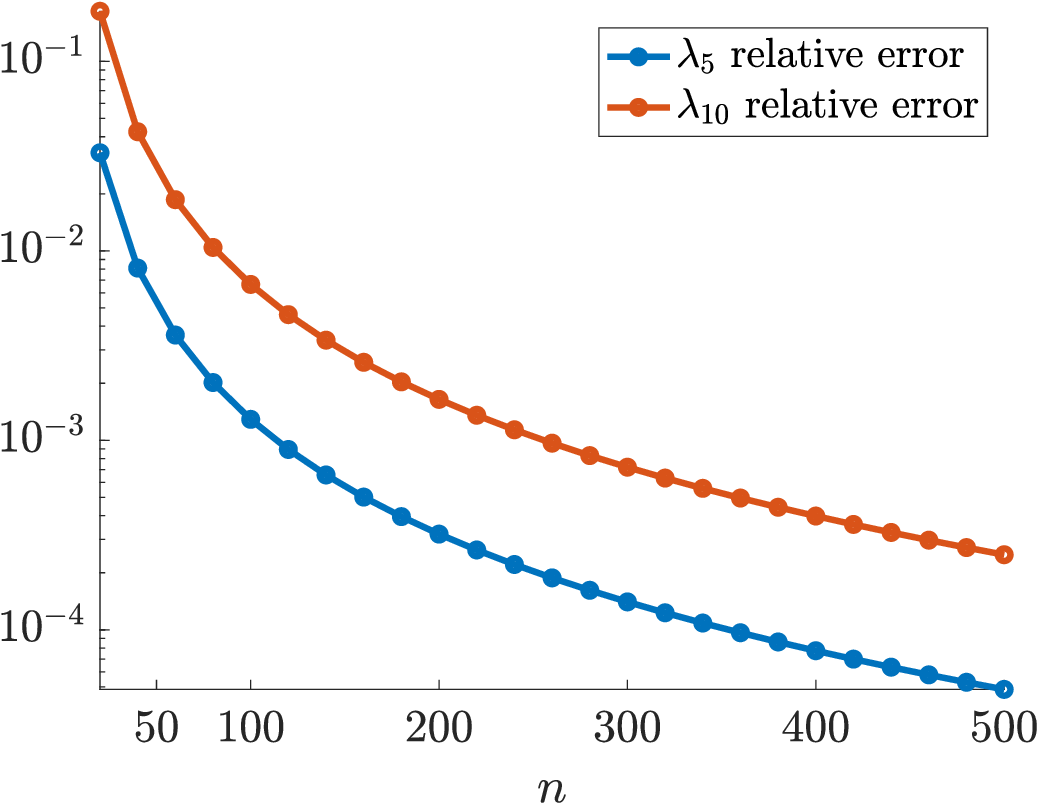}
   \caption{
   Relative errors in the eigenvalues $\lambda_{5}$ and $\lambda_{10}$ 
   computed with different numbers of quadrature nodes.}
   \label{fig:eigenvalue} 
   \end{figure} 

\boldheading{The impact of discretization on the eigenpairs of $\C$}
We next illustrate how the grid resolution affects the accuracy of the computed
eigenpairs of the covariance operator $\C$.  As before, we compute these
eigenpairs numerically using \Nystrom{} method with a composite trapezoid
rule.  

We consider the case of $\ell = 1$ and focus on $(\lambda_k, v_k)$ with
$k \in \{5, 10\}$.  
In Figure~\ref{fig:eigenvalue}, we report the relative error in computing these
eigenvalues as the number $n$ of grid points increases.  To approximate these
errors, we use highly accurate estimates of $\lambda_5$ and $\lambda_{10}$,
computed on a fine grid with 2000 nodes.  Observe that computing $\lambda_{5}$
with $n = 100$ results in a relative error of less than $10^{-3}$.  On the other
hand, achieving the same level of accuracy in the computed $\lambda_{10}$ 
requires a substantially finer grid.   
In Figure~\ref{fig:eigenvector}, we consider the corresponding eigenvectors,
computed using low- and high-resolution grids with $n = 20$ and $n = 500$ nodes, 
respectively.  Observe that resolving the high-frequency
oscillations of $v_{10}$ requires a sufficiently fine grid.  
\begin{figure}[!ht]\centering
\includegraphics[width=.4\textwidth]{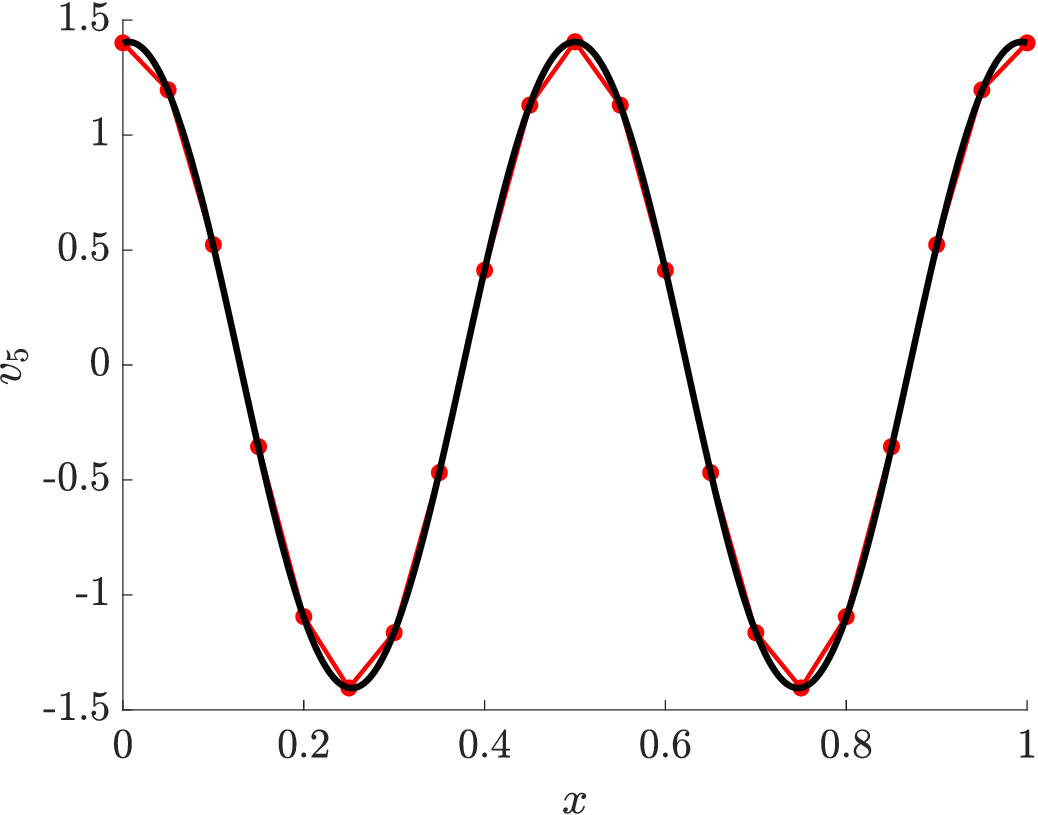}
\includegraphics[width=.4\textwidth]{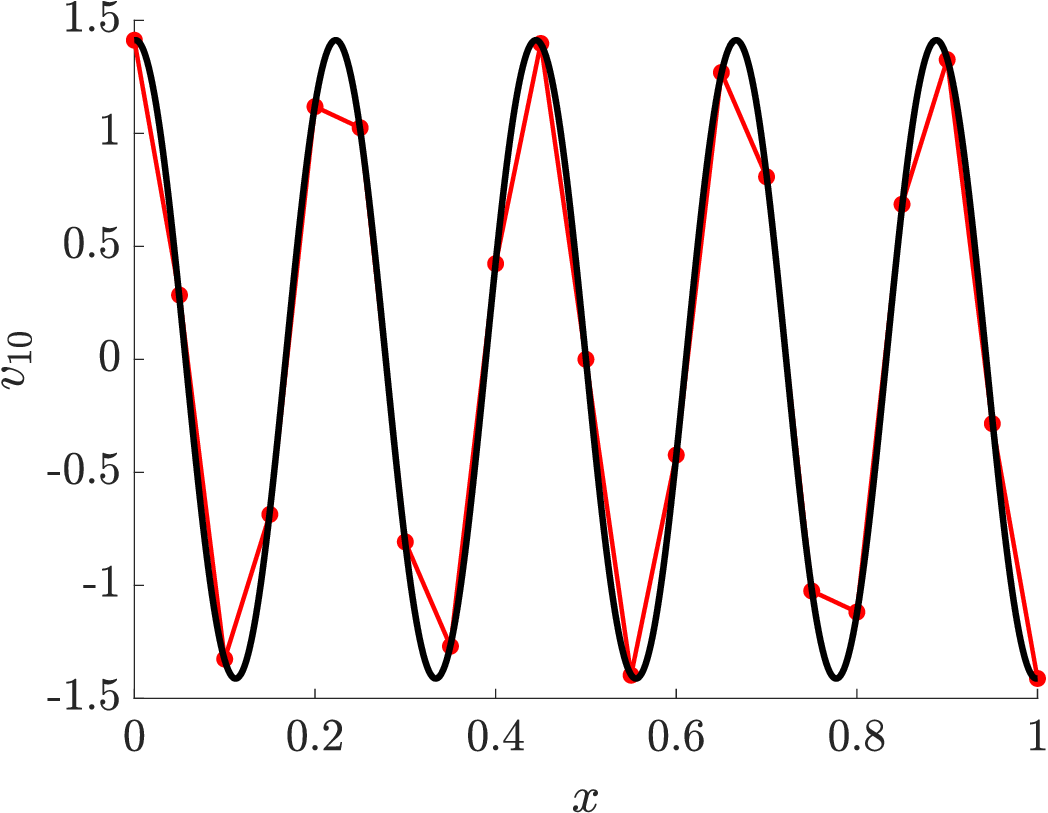}
\caption{Eigenvectors $v_{5}$ and $v_{10}$ estimated with a low- (red) and
high-resolution (black) grid.} \label{fig:eigenvector} \end{figure}

The present brief illustration indicates
that the resolution of the discretization scheme must be commensurate with the
number of terms retained in the KLE---the more terms retained, the 
larger the value of $n$ necessary for accurate estimation of the higher order 
eigenpairs. In practice, a careful grid refinement study is needed to ensure 
sufficient accuracy is achieved.

\boldheading{Choosing the truncation level}
\begin{figure}[ht]\centering
   \includegraphics[width=.4\textwidth]{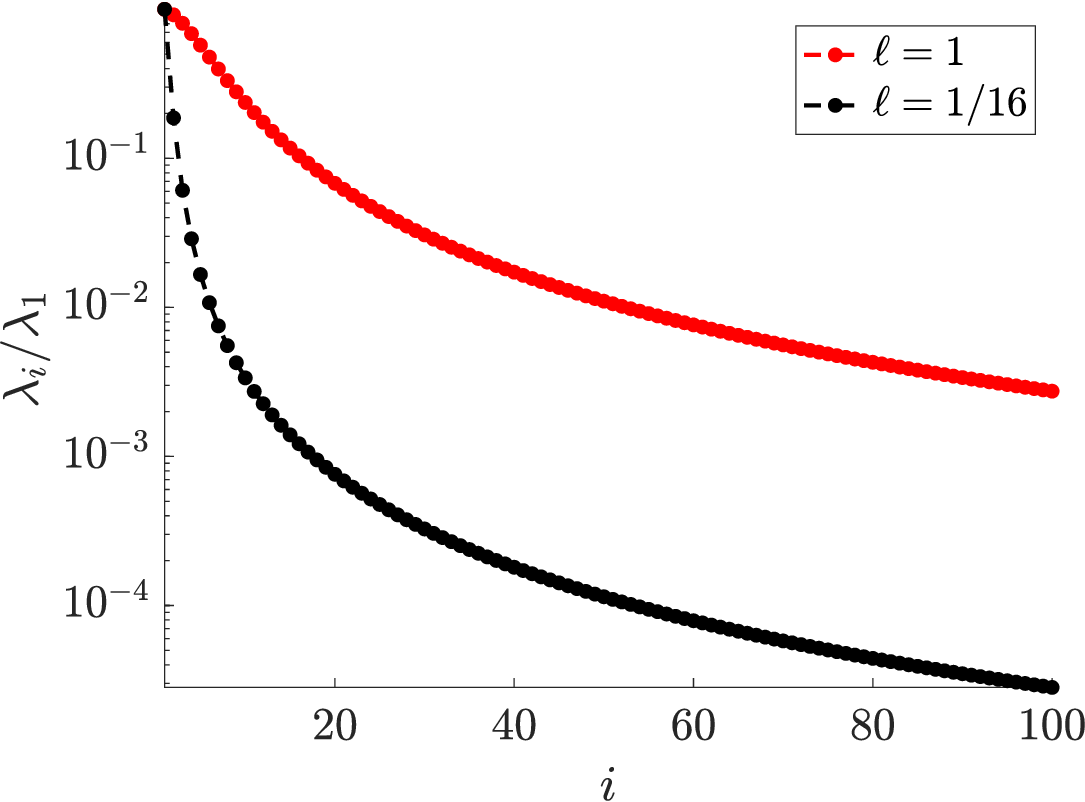}
   \caption{The first 100 normalized 
   eigenvalues of the covariance operator.}
   \label{fig:eigstudy}
\end{figure}
In Figure~\ref{fig:eigstudy}, we report the first 100 normalized eigenvalues of
the covariance operator when $\ell = 1$ and $\ell = 1/16$.  Observe that a
smaller correlation length parameter results in a slower spectral decay. To
assess how many terms we need to retain in the KLE, we can rely on the
truncation strategy outlined in Section~\ref{sec:trunc}.  
Specifically, we consider the ratio~\eqref{equ:ratio}.  For the
present choice of the covariance function, $\int_0^1 c(x, x) \, \dx = 1$, and thus
$\rho(r) = \sum_{i=1}^r \lambda_i$.  
As an example, when $\ell=1$, we need 21 terms in the KLE to capture about 
99\% of the average variance. On the other hand, in the case of $\ell=1/16$, we
need $r = 300$ to achieve the same effect. 

\begin{figure}[ht]\centering
   \includegraphics[width=.65\textwidth]{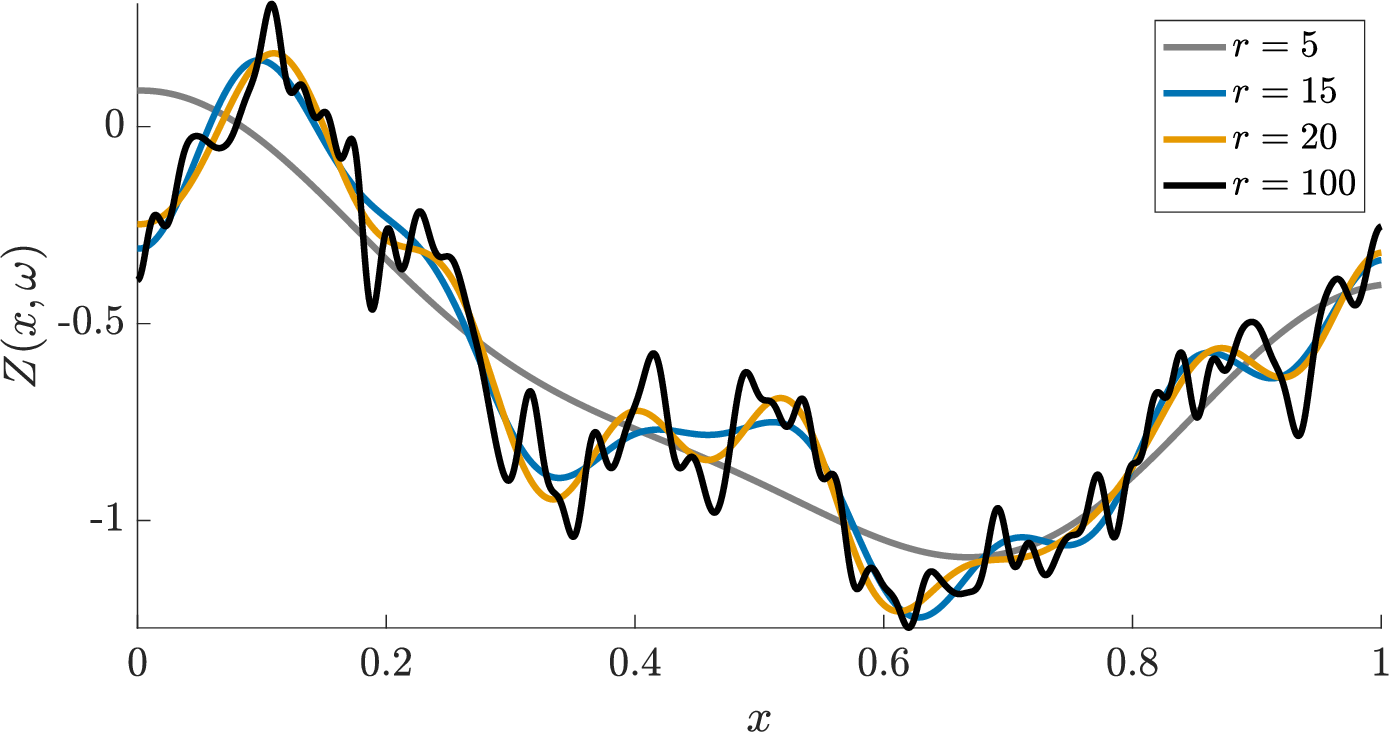}
   \caption{A single realization of $Z(x, \omega)$ with $\ell = 1$ and 
   several truncation levels.}
   \label{fig:truncation_study}
\end{figure}
It is important to note that
the truncation rule based on percentage of captured variance, while well-justified, 
is not always the most effective approach. 
Let us consider, for example, the case of $\ell = 1$. In
Figure~\ref{fig:truncation_study}, we consider a single realization of $Z(x,
\omega)$ for several truncation levels. We observe a
notable difference between the case of $r = 5$ and $r = 15$.  Subsequent
increases in the truncation level add further resolution. However, such
increases should be considered with the final goal of computations in mind. 
Specifically, if $Z(x, \omega)$ is an input to a computational model, one can 
retain only the terms in the KLE that have a material impact on the output of
the model.  We point to~\cite{AlexanderianReeseSmithEtAl19} for numerical
investigations of such issues in the context of models governed by partial
differential equations (PDEs). We revisit such goal-oriented approaches 
to obtaining truncated KLEs in the epilogue.

\section{Epilogue}\label{sec:conc}
There is much to learn about theory of random fields.  The reader is encouraged
to augment their reading of this article by studying standard books on this rich
area of study;  see, e.g.,~\cite{Adler10,AshGardner75,Lifshits12,Stein99}.  A
closely related area is probability theory on function spaces and
function-valued random variables~\cite{DaPrato06,DaPratoZabczyk14}.  Our discussion of
numerical methods and our numerical illustrations provide merely a starting
point. For further details regarding computing with KLEs, see,
e.g.,~\cite{BetzPapaioannouStraub14,Khoromskij09,Eiermann07}.

We end our discussion in this article by considering further perspectives 
on computations with random fields. 
In Section~\ref{sec:gaussian_numerics}, we focused primarily on computing with Gaussian
random fields. However, in practice, it is also common to work with non-Gaussian
fields.
Consider for example a mean-square continuous Gaussian random field
$Z(x, \omega)$ and a field quantity of the form 
\begin{equation}\label{equ:FQ}
Q(x, \omega) = F(Z(x, \omega)), \quad x \in \D, \omega \in \Omega,
\end{equation}
where, as before, $\D$ is a spatial domain and $\Omega$ a sample space.  Suppose
$F$ a nonlinear function taking values in $L^2(\D)$, In this case, $Q$ is not a
Gaussian random field.  

If $F$ in~\eqref{equ:FQ} is sufficiently regular, $Q$ remains a mean
square continuous random field and can be represented with a KLE.
However, in this case, one does not have access to an analytically defined
covariance function, complicating the solution of the eigenvalue
problem~\eqref{equ:eigenvalue_problem}.  In such cases, the covariance operator
of $Q$ may be approximated numerically via sampling, which enables computing an approximate KLE of
$Q$; see, e.g.,~\cite{AlexanderianReeseSmithEtAl19,Smith24}.  

\begin{figure}[ht]\centering
\includegraphics[width=0.7\textwidth]{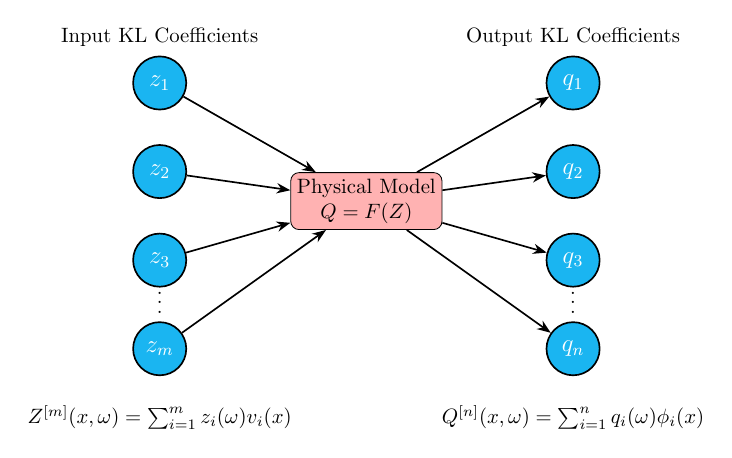}
\caption{Representing input and output of a computational model via the KLE.}
\label{fig:FQ}
\end{figure}

In practice, the input and output fields in~\eqref{equ:FQ} are represented with
truncated KLEs, say with $m$ and $n$ terms, respectively; see
Figure~\ref{fig:FQ}.  Determining $m$ and $n$ can be done following the strategy
outlined in Section~\ref{sec:computational_considerations}. Namely, we can
retain sufficiently many terms in KLE of $Z$ to capture a desired percentage of
the average variance in $Z$.  We may choose a truncation level for the output
field $Q$ in an analogous manner. While this is a principled approach, in
practice, one can perform input and output dimension reduction more efficiently.

In many cases the output field $Q$ is sensitive to only a
few input KL terms. This is the case, for example, when $F$ is defined in terms of the
solution of an elliptic or parabolic PDE where the PDE solution operator may act as
a smoother on the input field $Z$. In such cases, we can retain only the input KL
terms that are most influential to the output field $Q$.  Such influential
KL terms can be determined by performing some form of sensitivity
analysis; see, e.g.,~\cite{CleavesAlexanderianGuyEtAl19,CleavesAlexanderianSaad21}
that rely on global sensitivity analysis to determine important input KL modes. 
Further, when $F$ in~\eqref{equ:FQ} is a PDE-governed mapping, the output
field $Q$ often admits a KLE with rapidly decaying eigenvalues.  This enables
representing the output field $Q$ using only a few KL terms.  

The present discussion indicates that a goal-oriented approach to dimension
reduction is far more effective than using a priori truncation rules based on
percentage of explained variance. In fact,  as argued and demonstrated
in~\cite{ChenArnaudBaptistaEtAl25}, to achieve maximal efficiency, one can
perform optimal input and output dimension reduction simultaneously.  In
general, by fully exploiting problem structure, one can obtain efficient
representations of complex computational models and perform dimensionality
reduction in both input and output spaces.  This is critical for efficient
uncertainty quantification in models with function-valued inputs and outputs.

\bibliographystyle{siamplain}
\bibliography{refs}

\def\cprime{$'$}
\begin{thebibliography}{10}

\bibitem{Adler10}
{\sc R.~J. Adler}, {\em The Geometry of Random Fields}, SIAM, 2010.

\bibitem{Alexanderian15}
{\sc A.~Alexanderian}, {\em A brief note on the {K}arhunen--{L}o\`{e}ve
  expansion}, arXiv:1509.07526,  (2015).

\bibitem{AlexanderianReeseSmithEtAl19}
{\sc A.~Alexanderian, W.~Reese, R.~C. Smith, and M.~Yu}, {\em Model input and
  output dimension reduction using {K}arhunen--{L}o{\`e}ve expansions with
  application to biotransport}, ASCE-ASME Journal of Risk and Uncertainty in
  Engineering Systems, Part B: Mechanical Engineering, 5 (2019), p.~041006,

\bibitem{AshGardner75}
{\sc R.~B. Ash and M.~F. Gardner}, {\em Topics in Stochastic Processes},
  vol.~27 of Probability and Mathematical Statistics, Academic Press, 1975.

\bibitem{BerkoozHolmesLumley93}
{\sc G.~Berkooz, P.~Holmes, and J.~L. Lumley}, {\em The proper orthogonal
  decomposition in the analysis of turbulent flows}, Annual review of fluid
  mechanics, 25 (1993), pp.~539--575.

\bibitem{BetzPapaioannouStraub14}
{\sc W.~Betz, I.~Papaioannou, and D.~Straub}, {\em Numerical methods for the
  discretization of random fields by means of the {K}arhunen--{L}o{\`e}ve
  expansion}, Computer Methods in Applied Mechanics and Engineering, 271
  (2014), pp.~109--129.

\bibitem{Bhatia97}
{\sc R.~Bhatia}, {\em Matrix Analysis}, Springer, 1997.

\bibitem{ChenArnaudBaptistaEtAl25}
{\sc Q.~Chen, E.~Arnaud, R.~Baptista, and O.~Zahm}, {\em Coupled input-output
  dimension reduction: application to goal-oriented {B}ayesian experimental
  design and global sensitivity analysis}, SIAM Journal on Scientific
  Computing, 47 (2025), pp.~A2403--A2430.

\bibitem{CleavesAlexanderianSaad21}
{\sc H.~Cleaves, A.~Alexanderian, and B.~Saad}, {\em Structure exploiting
  methods for fast uncertainty quantification in multiphase flow through
  heterogeneous media}, Computational Geosciences, 25 (2021), pp.~2167--2189.

\bibitem{CleavesAlexanderianGuyEtAl19}
{\sc H.~L. Cleaves, A.~Alexanderian, H.~Guy, R.~C. Smith, and M.~Yu}, {\em
  Derivative-based global sensitivity analysis for models with high-dimensional
  inputs and functional outputs}, SIAM Journal on Scientific Computing, 41
  (2019), pp.~A3524--A3551.

\bibitem{DaPrato06}
{\sc G.~Da~Prato}, {\em An Introduction to Infinite-Dimensional Analysis},
  Springer, 2006.

\bibitem{DaPratoZabczyk14}
{\sc G.~Da~Prato and J.~Zabczyk}, {\em Stochastic Equations in Infinite
  Dimensions}, Cambridge university press, 2014.

\bibitem{Dur98}
{\sc A.~D{\"u}r}, {\em On the optimality of the discrete
  {K}arhunen--{L}o\`{e}ve expansion}, SIAM Journal on Control and Optimization,
  36 (1998), pp.~1937--1939.

\bibitem{Dym13}
{\sc H.~Dym}, {\em Linear Algebra in Action}, vol.~78, American Mathematical
  Soc., 2013.

\bibitem{Eiermann07}
{\sc M.~Eiermann, O.~G. Ernst, and E.~Ullmann}, {\em Computational aspects of
  the stochastic finite element method}, Computing and Visualization in
  Science, 10 (2007), pp.~3--15.

\bibitem{Fukunaga13}
{\sc K.~Fukunaga}, {\em Introduction to Statistical Pattern Recognition},
  Elsevier, 2013.

\bibitem{Ghanem}
{\sc R.~G. Ghanem and P.~D. Spanos}, {\em Stochastic Finite Elements: a
  Spectral Approach}, Springer-Verlag New York, Inc., New York, NY, USA, 1991.

\bibitem{GohbergGoldberg04}
{\sc I.~Gohberg, S.~Goldberg, and M.~A. Kaashoek}, {\em Basic Classes of Linear
  Operators}, 2004.

\bibitem{HirschLacombe99}
{\sc F.~Hirsch and G.~Lacombe}, {\em Elements of Functional Analysis},
  vol.~192, Springer, 1999.

\bibitem{HsingEubank15}
{\sc T.~Hsing and R.~Eubank}, {\em Theoretical Foundations of Functional Data
  Analysis, with an Introduction to Linear Operators}, Wiley Series in
  Probability and Statistics, Wiley, 2015.

\bibitem{Jolliffe02}
{\sc I.~T. Jolliffe}, {\em Principal component analysis}, Springer Series in
  Statistics, Springer-Verlag, New York, second~ed., 2002.

\bibitem{Karhunen47}
{\sc K.~Karhunen}, {\em \"uber lineare methoden in der
  wahrscheinlichkeitsrechnung}, Annales Academiae Scientiarum Fennicae, Series
  A. I: Mathematica-Physica, 37 (1947), pp.~1--79.
\newblock In German.

\bibitem{Keating83}
{\sc J.~P. Keating, J.~E. Michalek, and J.~T. Riley}, {\em A note on the
  optimality of the {K}arhunen--{L}o\`{e}ve expansion}, Pattern Recognition
  Letters, 1 (1983), pp.~203--204.

\bibitem{Khoromskij09}
{\sc B.~N. Khoromskij, A.~Litvinenko, and H.~G. Matthies}, {\em Application of
  hierarchical matrices for computing the {K}arhunen--{L}o{\`e}ve expansion},
  Computing, 84 (2009), pp.~49--67.

\bibitem{Knapp05}
{\sc A.~W. Knapp}, {\em Advanced Real Analysis}, Cornerstones, Birkh\"auser
  Boston, 2005.

\bibitem{Knio10}
{\sc O.~P. Le~Maitre and O.~M. Knio}, {\em Spectral Methods for Uncertainty
  Quantification With Applications to Computational Fluid Dynamics}, Springer,
  2010.

\bibitem{Lifshits12}
{\sc M.~Lifshits}, {\em Lectures on {G}aussian Processes}, Springer, 2012.

\bibitem{LindgrenRue11}
{\sc F.~Lindgren, H.~Rue, and J.~Lindstr{\"o}m}, {\em An explicit link between
  {G}aussian fields and {G}aussian {M}arkov random fields: the stochastic
  partial differential equation approach}, Journal of the Royal Statistical
  Society Series B: Statistical Methodology, 73 (2011), pp.~423--498.

\bibitem{Loeve77}
{\sc M.~Lo{\`e}ve}, {\em Probability Theory {I}}, vol.~45 of Graduate Texts in
  Mathematics, Springer, 1977.

\bibitem{naylorsell}
{\sc A.~W. Naylor and G.~Sell}, {\em Linear Operator Theory in Engineering and
  Science}, Springer-Verlag, New York, 1982.

\bibitem{Rao73}
{\sc C.~R. Rao, C.~R. Rao, M.~Statistiker, C.~R. Rao, and C.~R. Rao}, {\em
  Linear Statistical Inference and its Applications}, vol.~2, Wiley New York,
  1973.

\bibitem{RasmussenWilliams06}
{\sc C.~E. Rasmussen and C.~K.~I. Williams}, {\em Gaussian Processes for
  Machine Learning}, MIT Press, Cambridge, MA, 2006.

\bibitem{ReedSimon72}
{\sc M.~Reed and B.~Simon}, {\em Methods of Modern Mathematical Physics:
  Functional Analysis}, vol.~1, Academic Press, 1972.

\bibitem{Smith24}
{\sc R.~C. Smith}, {\em Uncertainty Quantification: Theory, Implementation, and
  Applications}, SIAM, second~ed., 2024.

\bibitem{Soize17}
{\sc C.~Soize}, {\em Uncertainty quantification}, vol.~23, Springer, 2017.

\bibitem{Stein99}
{\sc M.~L. Stein}, {\em Interpolation of Spatial Data: Some Theory for
  Kriging}, Springer, 1999.

\bibitem{Williams91}
{\sc D.~Williams}, {\em Probability with Martingales}, Cambridge Mathematical
  Textbooks, Cambridge University Press, Cambridge, 1991.

\end{thebibliography}
\bigskip 

\appendix 
\section{Proofs of auxiliary results}
\subsection{Proof of Proposition~\ref{prp:KyFanHilbert}}\label{sec:KyFanProof}
We first introduce some definitions.  Let
$\{u_i\}_{i=1}^r$ be as in the proposition and let $U = \operatorname{span}\{u_1, \ldots, u_r\}$.
We define the orthogonal projection operator of $\H$ onto $U$ as $\PU = \sum_{i=1}^r u_i
\otimes u_i$.  Here, $u_i \otimes u_i$ is the tensor product operator defined by
$(u_i \otimes u_i)x = \ip{u_i}{x}u_i$ for all $x \in \H$. Note also that $\PU$ is a 
self-adjoint operator.
Next, let $\A$ be as in the proposition and define the \emph{restricted} operator 
$\AU = \PU \A \PU$. This is a finite-dimensional operator whose range is a subspace of $U$.
We also note the following useful relation,
\begin{equation}\label{equ:AUuu}
\ip{\AU x}{x} = \ip{\A x}{x}, \quad \text{for all } x \in U.
\end{equation}
Due to the assumptions on $\A$, we have that $\AU$ is a positive self-adjoint 
operator. In particular, $\AU$ can be viewed as a positive self-adjoint operator on $U$. 

We are now ready to prove Proposition~\ref{prp:KyFanHilbert}.  
\emph{Proof}.
Let $U =
\operatorname{span}\{u_1, \ldots, u_r\} \subset \H$ and consider 
$\AU = \PU \A \PU$. 
By the Spectral Theorem, applied to $\AU$, there exists  an orthonormal basis
$\{\phi_j\}_{j=1}^r$ of $U$ given by eigenvectors of $\AU$
with corresponding real non-negative eigenvalues $\{\mu_j\}_{j=1}^r$. We assume these eigenpairs are ordered
so that 
$\mu_1 \geq \mu_2 \geq \cdots \geq \mu_r \geq 0$.  First, note that
\begin{equation}\label{equ:Auu}
\sum_{i=1}^r \ip{\A u_i}{u_i} = \sum_{i=1}^r \ip{\AU u_i}{u_i} 
= \trace(\AU) 
= \sum_{j=1}^r \ip{\AU \phi_j}{\phi_j} 
= \sum_{j=1}^r \mu_j,
\end{equation}
where 
in the second equality, we have used the fact that 
$\{u_i\}_{i=1}^r$ is an orthonormal basis of $U$.
Next, note that by the Courant--Fischer min-max theorem, 
for each $j \in \{1, \ldots, r\}$, the $j$th eigenvalue $\lambda_j$ of $\A$ is characterized by
\[
\lambda_j = \sup_{\substack{W \subset \H \\ \dim W = j}} \inf_{\substack{x \in W \\ \|x\|=1}} \ip{\A x}{x}.
\]
Letting $W_j = \operatorname{span}\{\phi_1, \ldots, \phi_j\} \subset U \subset \H$,
we have
\begin{equation}\label{equ:KF}
\lambda_j \geq 
\inf_{\substack{x \in W_j \\ \|x\|=1}} \ip{\A x}{x} = 
\inf_{\substack{x \in W_j \\ \|x\|=1}} \ip{\AU x}{x} = 
\mu_j.
\end{equation}
Thus, we have 
\[
\lambda_j \geq \mu_j
\quad \text{for all } j \in \{1, \dots, r\}. 
\]
Therefore, using~\eqref{equ:Auu}, we have 
$\sum_{i=1}^r \ip{\A u_i}{u_i} = \sum_{j=1}^r \mu_j \leq \sum_{j=1}^r \lambda_j$.\hfill\proofbox

\bigskip 
\bigskip 

\subsection{Proof of Lemma~\ref{lem:cont}}\label{appdx:proof_cont}
Let $c$ be the covariance function of $Z$.
Suppose $c$ is continuous, and let $\{x_n\}_{n \in \N}$ be a sequence 
that converges to $x \in \D$. Note that 
\begin{multline*}
   \E[(\ZZ{x_n} - \Zx)^2] 
      = \E[\ZZ{x_n}^2]-2\E[\ZZ{x_n}\Zx]+\E[\Zx^2] \\ 
      = c(x_n, x_n) - 2 c(x_n,x) + c(x, x) \to 0,
\end{multline*}
as $n \to \infty$.
Hence, $Z$ is mean-square continuous.   
Conversely, suppose $\Zx$ is mean-square continuous.
Let $\{(x_n, y_n)\}_{n \in \N}$ be a sequence that converges to $(x, y)$. 
Then,
\begin{align*}
|c(x_n, y_n) - c(x, y)| 
    &= |\mathbb{E}[Z(x_n) Z(y_n)] - \mathbb{E}[Z(x) Z(y)]| \\
    &= \big| \mathbb{E}[(Z(x_n)-Z(x))(Z(y_n)-Z(y))] \notag \\
    &\quad + \mathbb{E}[(Z(x_n) - Z(x))Z(y)] + \mathbb{E}[(Z(y_n) - Z(y))Z(x)] \big| \\
    &\leq \big| \mathbb{E}[(Z(x_n)-Z(x))(Z(y_n)-Z(y))] \big| \notag \\
    &\quad + \big| \mathbb{E}[(Z(x_n) - Z(x))Z(y)] \big| + \big| \mathbb{E}[(Z(y_n) - Z(y))Z(x)] \big|. 
\end{align*}
Therefore,
applying the Cauchy--Schwarz inequality to each term on the right-hand side 
and using mean-square continuity of $Z$, we have 
$\lim\limits_{n\to\infty}|c(x_n, y_n) - c(x, y)| = 0$. \hfill\proofbox

\subsection{Proof of Lemma~\ref{lem:Covariance}}\label{appdx:proof_covariance}
Since the process $Z$ is mean-square continuous
Lemma~\ref{lem:cont} implies that its covariance function 
$c(x, y) = \E\{\Zx \Zy\}$ is
continuous. Also, the covariance function is clearly symmetric.
Hence, by Lemma~\ref{lem:compactop}, 
$\C$ is a compact self-adjoint operator. It remains to show that 
$\C$ is positive.
We simply note
\begin{align*}
   \ip{\C u}{u} = \int_\D [\C u](x) u(x) \, \dx  
               &=\int_\D \Big(\int_\D \E[\Zx \Zy] u(y) \, \dy \Big) u(x)\,\dx
               \\
               &=\E\Big[\int_\D \int_\D \Zx \Zy u(x) u(y) \, \dx \,\dy\Big]
               \\
               &=\E\Big[\Big(\int_\D \Zx u(x) \,\dx\Big) 
                       \Big(\int_\D \Zy u(y) \, \dy\Big)\Big]
               \\
               &=\E\Big[ \Big(\int_\D \Zx u(x) \,\dx\Big)^2\Big]\geq 0,
\end{align*}
where we used Fubini's Theorem to interchange integrals.\hfill\proofbox 

\subsection{Proof of Lemma~\ref{lem:coeffs}}\label{appdx:coeffs}
To see the first assertion, we use the Fubini's theorem along 
with the fact that $Z$ is a centered process to note
\begin{multline*}
\E[z_i] = \E\Big[\int_\D \Zx v_i(x) \, \dx\Big] =  
\int_\Omega \int_\D \Zx(\omega) v_i(x) \, \dx \, \P(\domega) \\
= \int_\D \int_\Omega \big(\Zx(\omega) v_i(x) \big)\,\P(\domega) \, \dx = 
\int_\D \E[ \Zx ] v_i(x) \, \dx =  0.
\end{multline*}
To see the second assertion, we proceed as follows
\begin{align*}
\E[z_i z_j] &= \E\Big[ \Big(\int_\D \Zx v_i(x) \, \dx\Big)
                      \Big(\int_\D \Zy v_j(y) \, \dy\Big)\Big]\\
             &= \E\Big[ \int_\D \int_\D \Zx v_i(x) 
                      \Zy v_j(y) \, \dx \, \dy\Big] \\
             &= \int_\D \int_\D \E[\Zx \Zy] v_i(x) v_j(y) \, \dx \, \dy \\
             &= \int_\D \Big(\int_\D c(x,y) v_j(y) \, \dy \Big)v_i(x) \, \dx \\
             &= \int_\D [\C v_j](x) v_i(x) \, \dx 
             = \ip{\C v_j}{v_i} 
             = \lambda_j \delta_{ij},
\end{align*}
where we have used Fubini's Theorem to interchange integrals and
the last conclusion follows from orthonormality of eigenvectors of $\C$. \hfill\proofbox

\end{document}